\documentclass[11pt]{amsart}

\usepackage{amsfonts,amssymb,enumerate,color,bm,hhline,makecell,array}

\usepackage{array}
\usepackage{mathtools,amssymb}
\usepackage{mathrsfs}
\usepackage{comment}
\usepackage[all,ps,cmtip]{xy} 
\usepackage[normalem]{ulem}
\usepackage[colorlinks=true,citecolor=, urlcolor=, linkcolor=, pagebackref]{hyperref}
\usepackage{tikz-cd}
\usepackage{amscd}
\usepackage[shortlabels]{enumitem}
\usepackage{tensor}
\usepackage{tikz}
\usetikzlibrary{matrix,arrows}
\usepackage{bm}
\usepackage{graphicx}
\usepackage{extarrows}
\usepackage{subcaption}
\usepackage{mathtools}
\usepackage{comment}
\usetikzlibrary{cd, calc}

\textwidth=450pt \oddsidemargin=-6pt
\evensidemargin=-6pt

\newtheorem{theorem}{Theorem}

\newcommand{\To}{\longrightarrow}

\newtheorem{prop}{Proposition}[section]
\newtheorem{lemma}{Lemma}[section]

\newtheorem{coro}{Corollary}

\newtheorem{rmk}{Remark}[section]

\newcommand{\At}{\operatorname{At}}
\newcommand{\Atp}{\operatorname{At_{p}}}

\newcommand{\bl}{\operatorname{Bl}}
\newcommand{\Spec}{\operatorname{Spec}}

\newcommand{\Pic}{\operatorname{Pic}}
\newcommand{\ext}{\operatorname{Ext}}
\newcommand{\Hom}{\operatorname{Hom}}
\newcommand{\ku}{\operatorname{Kum}}
\newtheorem{thm}{Theorem}[section]

\usepackage{mathrsfs}
\newcommand{\Tot}{\operatorname{Tot}}
\newcommand{\sym}{\operatorname{Sym}}

\newcommand{\td}{\operatorname{td}}

\title{Moduli of sheaves and deformation to the normal cone}
\author{Yifan Zhao}
\address{Department of Mathematics, Imperial College, London SW7 2AZ, United Kingdom}
\email{yz23@ic.ac.uk}

\date{}
\begin{document}
\maketitle
\begin{abstract}
Given a closed immersion between arbitrary smooth complex projective varieties, we prove that the two operations: (1) taking the moduli space of stable sheaves, and (2) taking the deformation to the normal cone, commute in a precise sense. In the case of curves inside symplectic surfaces, previously studied by Donagi--Ein--Lazarsfeld, the corresponding deformation to the normal cone space is an open subset of the relative moduli space of sheaves. As an application, we show that generalized Kummer varieties degenerate to natural symplectic subvarieties of the Hitchin system for curves of genus $g \geq 2$.
\end{abstract}

\section{Introduction}
Given a closed immersion of complex schemes $i:Y\xhookrightarrow{}X,$ the associated deformation to the normal cone is the following scheme over the affine line $\mathbb{C}:$ $$
D_{Y/X}:=
\bl_{Y\times 0}(X\times \mathbb{C}) \backslash  \bl_{Y\times 0}(X\times0) \To \mathbb{C}.
$$
Its generic fibre over $t\neq0$ is $X$ and the special fibre over $0\in \mathbb{C}$ is the normal cone $C_{Y/X}.$

Assume $X,Y$ are projective and smooth, $C_{Y/X}$ is the normal bundle $N_{Y/X}$. The central object of our study is the relative moduli space, introduced in \cite{sim}, of Gieseker stable sheaves properly supported on the fibres of $D_{Y/X}\to \mathbb{C}$.

Fix a stable sheaf $E_0$ on $Y,$ the moduli space $\mathcal{M}_{Y}:=\mathcal{M}_{Y}([E_0])$ parametrizes stable sheaves numerically equivalent to $E_0.$ Similarly,  we denote by $$\mathcal{M}_X, \mathcal{M}_{N_{Y/X}}, \text{and } \mathcal{M}_{D_{Y/X}/\mathbb{C}}$$ the (relative) moduli spaces parametrizing sheaves numerically equivalent to $i_*E_0$ on $X, N_{Y/X}$ and the fibres of $D_{Y/X}\to \mathbb{C}$ respectively. Pushing stable sheaves forward via $i$ induces a closed immersion $\mathcal{M}_{Y}\xhookrightarrow{}\mathcal{M}_X.$

We assume throughout that there are no strictly semi-stable sheaves. Our main result says the two constructions (1) taking (relative) moduli spaces of Gieseker stable sheaves, and (2) deformation to the normal cone, commute:
\begin{theorem}[Theorem \ref{thm3.1}] \label{thm1} 
There exists an injective morphism of schemes over the affine line
\begin{equation}
\psi: \label{eq1}
D_{\mathcal{M}_Y/\mathcal{M}_X} \To \mathcal{M}_{D_{Y/X}/\mathbb{C}},
\end{equation}
extending the isomorphism between the two families over $\mathbb{C}^* \subset \mathbb{C}.$

If moreover  open subsets $U\subset \mathcal{M}_{D_{Y/X}/\mathbb{C}}$ and $\psi^{-1}(U)\subset D_{\mathcal{M}_Y/\mathcal{M}_X}$ are varieties and $U$ is normal, the restriction $\psi:\psi^{-1}(U)\to U$ is an open immersion.
\end{theorem}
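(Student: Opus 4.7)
The strategy is to produce $\psi$ via the moduli functor of $\mathcal{M}_{D_{Y/X}/\mathbb{C}}$: construct a coherent sheaf $\widetilde{\mathcal{F}}$ on $D_{\mathcal{M}_Y/\mathcal{M}_X}\times_{\mathbb{C}}D_{Y/X}$, flat over $D_{\mathcal{M}_Y/\mathcal{M}_X}$ and fiberwise a stable sheaf in the prescribed numerical class, and let $\psi$ be the resulting classifying morphism. Over $\mathbb{C}^*\subset\mathbb{C}$ both sides canonically restrict to $\mathcal{M}_X\times\mathbb{C}^*$ carrying the pullback of the universal family $\mathcal{F}$ on $\mathcal{M}_X\times X$, so the isomorphism asserted in the statement over $\mathbb{C}^*$ is automatic; the entire content is the extension across $t=0$.

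\textbf{Construction of the family.} Write $p:D_{\mathcal{M}_Y/\mathcal{M}_X}\times_{\mathbb{C}}D_{Y/X}\to\mathcal{M}_X\times X\times\mathbb{C}$ for the composite blowup morphism. The naive pullback $p^*\mathcal{F}$ is the desired object over $\mathbb{C}^*$, but on the central fiber it factors through $\mathcal{M}_Y\times Y$ and entirely forgets the normal direction $v\in N_{\mathcal{M}_Y/\mathcal{M}_X}$, so $\psi$ would collapse onto $\mathcal{M}_Y$. I would therefore modify $p^*\mathcal{F}$ using the exceptional divisors (equivalently, the Rees-algebra descriptions of the two blowups) so that the restriction to $\{v\}\times N_{Y/X}$, with $v$ a normal vector at $[E]\in\mathcal{M}_Y$, records a genuine first-order thickening of $i_*E$ inside $N_{Y/X}$. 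At the level of a single point, the local-to-global spectral sequence
\[
E_2^{p,q}=\ext^q_Y(E,E\otimes\wedge^p N_{Y/X})\;\Longrightarrow\;\ext^{p+q}_X(i_*E,i_*E)
\]
identifies the leading graded piece of $N_{\mathcal{M}_Y/\mathcal{M}_X}|_{[E]}$ with $\Hom_Y(E,E\otimes N_{Y/X})$, and any $\phi$ therein cuts out a sheaf on $N_{Y/X}$ via the standard "spectral" pushforward along the section $\phi:Y\to N_{Y/X}$; in the Donagi--Ein--Lazarsfeld setting this recovers the Beauville--Narasimhan--Ramanan correspondence. The technical burden is globalising over $\mathcal{M}_Y$ and then verifying flatness of $\widetilde{\mathcal{F}}$ together with fiberwise stability throughout $D_{\mathcal{M}_Y/\mathcal{M}_X}$.

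\textbf{Injectivity and open immersion.} Given $\widetilde{\mathcal{F}}$, injectivity of $\psi$ is checked fiber by fiber over $\mathbb{C}$: over $\mathbb{C}^*$ it is clear, and over $t=0$ one inverts the construction by recovering $v$ as the Kodaira--Spencer class of the thickening encoded by $\widetilde{\mathcal{F}}|_{\{v\}\times N_{Y/X}}$. For the second part, $\psi|_{\psi^{-1}(U)}:\psi^{-1}(U)\to U$ is a morphism of varieties that is separated, injective (hence quasi-finite), and birational (being an isomorphism on the dense open $\mathbb{C}^*\cap U$, or on the dense open of its irreducible special-fiber component when $\psi^{-1}(U)$ lies over $0$); normality of $U$ and Zariski's Main Theorem then force it to be an open immersion. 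The main obstacle throughout is the family construction: translating the formal blowup / normal-bundle data into actual coherent sheaves on $N_{Y/X}$, globalising this over $\mathcal{M}_Y$, and preserving flatness and stability.
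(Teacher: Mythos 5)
Your outline follows the same skeleton as the paper (build a flat family on $D_{Y/X}\times_{\mathbb{C}}D_{\mathcal{M}_Y/\mathcal{M}_X}$ via the Rees picture, take the classifying map, check injectivity fibrewise, finish with Zariski's main theorem), but the two steps that carry all the content are not actually supplied. For the family, the paper's construction is concrete: one takes the module $\oplus_{n}t^{-n}J^{n}\otimes F$ and observes that since $F/JF$ is a family of sheaves scheme-theoretically supported on $Y$, one has $IF\subset JF$, i.e.\ a map $I\otimes F\to J\otimes F$; this is exactly what makes $\oplus_{n}t^{-n}J^{n}\otimes F$ a module over the Rees algebra of $X\times\mathcal{M}_X$ along $Y\times\mathcal{M}_Y$ (tensored over $\mathbb{C}[t]$), and relative Spec then produces $\mathcal{F}$, with flatness and fibrewise stability inherited from $F$ (plus the spectral-correspondence lemma at $t=0$). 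Your plan to "modify $p^*\mathcal{F}$ using the exceptional divisors", with globalisation, flatness and stability declared a technical burden, leaves the construction unexecuted, and the key input $IF\subset JF$ — without which there is no Rees-module structure at all — does not appear.

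More seriously, the injectivity over $t=0$ is asserted rather than proved. Saying one "recovers $v$ as the Kodaira--Spencer class of the thickening" is exactly the statement at issue: that the linear map $\Hom_{\mathbb{C}}(J/J^2|_p,\mathbb{C})\to\Hom_{Y}(E_p,E_p\otimes N_{Y/X})$, $v\mapsto\phi_v$, is injective (Proposition 3.1 of the paper). Your local-to-global spectral sequence remark only locates the target; it does not show injectivity, and it silently assumes the unobstructed picture: in the general setting $\mathcal{M}_X$ and $\mathcal{M}_Y$ may be singular, $J/J^2|_p$ need not be the quotient of tangent spaces, and the point of the normal cone only produces a Higgs pair $(E_p,\phi_v)$ that could a priori coincide for distinct $v$. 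The paper spends all of Section 5 on precisely this: it identifies $v\mapsto\phi_v$ with a reduced Atiyah class (reducing to the linear case $X=\Tot_Y(N_{Y/X})$ by degenerating along $D_{Y/X}\to\mathbb{C}$ and using functoriality of Atiyah classes), and then deduces injectivity from the fact that $\At_F,\At_E$ define obstruction theories, so the tangent maps are isomorphisms and the map on obstruction spaces is injective, whence the five lemma applies. Nothing in your proposal substitutes for this argument. Two smaller points: the spectral correspondence in higher rank is not pushforward along a section $\phi:Y\to N_{Y/X}$ (the Higgs field is not a section of the normal bundle), though stability of $E_p$ does reduce isomorphism of Higgs pairs to equality of Higgs fields; and your ZMT step for the open-immersion claim agrees with the paper's.
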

Key ingredients of our proof include the Rees and relative Spec construction. In Section \ref{sec2}, we construct a flat $D_{\mathcal{M}_Y/\mathcal{M}_X}$-family of sheaves $\mathcal{F}$ on fibres of $D_{Y/X}\to\mathbb{C},$ which induces $\psi: D_{\mathcal{M}_Y/\mathcal{M}_X}\to \mathcal{M}_{D_{Y/X}/\mathbb{C}}$ in  (\ref{eq1}).  

\begin{equation*}
\begin{tikzcd}
    \text{Sheaf $\mathcal{F}$ on $D_{Y/X}$}\times_{\mathbb{C}}D_{\mathcal{M}_Y/\mathcal{M}_X} \arrow{rrr}{\text{Classifying map}}  &  &  & \text{The morphism $\psi$} \\
    \\
    \text{Universal sheaf F on $X\times \mathcal{M}_{X}$} \arrow{uu}{\text{Relative Spec construction}}
  \end{tikzcd}
\end{equation*}

Classically, Donagi--Ein--Lazarsfeld \cite{DEL} studied such relative moduli spaces associated with curves on K3 surfaces: $C\xhookrightarrow{i}{}S$, where $C$ has genus $g \geq 2$. The Chern character $ch(E_0)=(r,d)\in H^0(C)\oplus H^2(C)$ satisfies $r>0, \gcd(r,d)=1.$

In this setting, $\mathcal{M}_{D_{C/S}/\mathbb{C}}$ is smooth \cite{dCMS}. The moduli of stable bundles $\mathcal{M}_{C}$ embeds in the hyperkähler manifold $\mathcal{M}_S$ as a Lagrangian submanifold. It is argued in \cite{DEL} that the $\mathbb{C}$-family $\mathcal{M}_{D_{C/S}/\mathbb{C}}$ is a degeneration of the Beauville--Mukai system \cite{BM} to the Hitchin system \cite{hit}: indeed, the special fibre $\mathcal{M}_{N_{C/S}}$ over $0\in \mathbb{C}$, parametrizing stable sheaves on the normal bundle $$
N_{C/S} \cong T^*C,
$$ is isomorphic to the moduli space $\mathcal{M}_{\text{GL(r)}}$ of stable $\text{GL(r)}$ Higgs bundles, the total space of the Hitchin integrable system.

The degeneration $\mathcal{M}_{D_{C/S}/\mathbb{C}}\to \mathbb{C}$ thus connects two classes of extensively studied Lagrangian-fibred hyperkähler manifolds. More recently, de Cataldo--Maulik--Shen \cite{dCMS} exploited this degeneration (replacing K3 surfaces by abelian surfaces) to prove the P=W conjecture \cite{p=w} for genus 2 curves. Theorem \ref{thm1} provides a surprisingly simple description of $\mathcal{M}_{D_{C/S}/\mathbb{C}}\to \mathbb{C}$:
\begin{coro}\label{coro1}
Suppose $C$ is a smooth curve of genus $g \geq 2$ in a K3 or abelian surface $S,$ $D_{\mathcal{M}_C/\mathcal{M}_S}$ is an open dense subset of $\mathcal{M}_{D_{C/S}/\mathbb{C}}$. 

The complement of $D_{\mathcal{M}_C/\mathcal{M}_S}$ in $\mathcal{M}_{D_{C/S}/\mathbb{C}}$ is precisely the closed subset of $\mathcal{M}_{\emph{GL(r)}}$ parametrizing Higgs bundles with unstable underlying bundles. 
\end{coro}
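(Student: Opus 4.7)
The plan is to bootstrap from Theorem \ref{thm1} and then identify the map on central fibres using the Lagrangian embedding. First I would verify the hypotheses of the second part of Theorem \ref{thm1}: the target $\mathcal{M}_{D_{C/S}/\mathbb{C}}$ is smooth (hence normal) by \cite{dCMS}; the component of $\mathcal{M}_S$ containing $i_*E_0$ is a smooth irreducible holomorphic symplectic variety by Mukai, and $\mathcal{M}_C$ is smooth and irreducible since $\gcd(r,d) = 1$, so $D_{\mathcal{M}_C/\mathcal{M}_S}$ is smooth and irreducible as well. Theorem \ref{thm1} then gives that $\psi$ is an open immersion.

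For density, over $\mathbb{C}^*$ the morphism $\psi$ is an isomorphism by Theorem \ref{thm1}, so its image contains the dense open subset $\mathcal{M}_{D_{C/S}/\mathbb{C}} \setminus \mathcal{M}_{N_{C/S}}$. The complement of $\psi(D_{\mathcal{M}_C/\mathcal{M}_S})$ therefore lies entirely in the central fibre $\mathcal{M}_{N_{C/S}} = \mathcal{M}_{\text{GL(r)}}$.

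It remains to identify $\psi$ on central fibres. On the source side, the central fibre of $D_{\mathcal{M}_C/\mathcal{M}_S}$ is the total space of the normal bundle $N_{\mathcal{M}_C/\mathcal{M}_S}$. Since $\mathcal{M}_C \subset \mathcal{M}_S$ is Lagrangian, the holomorphic symplectic form provides a canonical isomorphism $N_{\mathcal{M}_C/\mathcal{M}_S} \cong T^*\mathcal{M}_C$. On the target side, the classical Hitchin description identifies the open locus of Higgs bundles with stable underlying vector bundle with $T^*\mathcal{M}_C \subset \mathcal{M}_{\text{GL(r)}}$; its complement is exactly the Higgs bundles with unstable underlying bundle.

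The hard part is verifying that $\psi$ restricted to the central fibre realizes the composition $N_{\mathcal{M}_C/\mathcal{M}_S} \cong T^*\mathcal{M}_C \hookrightarrow \mathcal{M}_{\text{GL(r)}}$. I would establish this by unwinding the construction of the classifying family $\mathcal{F}$ from Section \ref{sec2}: a normal vector $v \in N_{\mathcal{M}_C/\mathcal{M}_S, [V]}$ should produce, via the Rees/relative Spec construction, a sheaf on $T^*C = N_{C/S}$ set-theoretically supported on the zero section and with underlying $\mathcal{O}_C$-module $V$, whose $\mathcal{O}_{T^*C}$-action encodes a Higgs field $\phi_v \in H^0(\mathrm{End}(V) \otimes \omega_C)$; deformation theory combined with the symplectic pairing should identify $\phi_v$ with the image of $v$ under $N_{\mathcal{M}_C/\mathcal{M}_S,[V]} \cong T^*_{[V]}\mathcal{M}_C$. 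Granting this matching, injectivity of $\psi$ from Theorem \ref{thm1} forces $\psi(N_{\mathcal{M}_C/\mathcal{M}_S}) = T^*\mathcal{M}_C$, and the description of the complement follows.
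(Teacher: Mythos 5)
Your skeleton is the intended one (Theorem \ref{thm1} gives an open immersion because $\mathcal{M}_{D_{C/S}/\mathbb{C}}$ is smooth, the complement of the image lies in the central fibre, and by the spectral correspondence the central fibre $N_{\mathcal{M}_C/\mathcal{M}_S}$ of the source maps into the locus of Higgs bundles with stable underlying bundle). The genuine gap is the last step: that the image of the central fibre is \emph{all} of that locus, i.e.\ that every Higgs field on every stable bundle arises from a normal vector. You reduce this to a ``matching'' of $\phi_v$ with the image of $v$ under the symplectic identification $N_{\mathcal{M}_C/\mathcal{M}_S}\cong T^*\mathcal{M}_C$, but you only say deformation theory ``should'' give it; no argument is supplied, and this is precisely the technical heart of the problem --- in the paper even the weaker statement, injectivity of the linear map $\psi_p:\Hom_{\mathbb{C}}(J/J^2|_p,\mathbb{C})\to\Hom_{C}(E_p,E_p\otimes T^*C)$, $v\mapsto\phi_v$, is Proposition \ref{prop3.1}, whose proof (reduced Atiyah classes) occupies all of Section \ref{sec5}. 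Your fallback phrase ``injectivity of $\psi$ forces $\psi(N_{\mathcal{M}_C/\mathcal{M}_S})=T^*\mathcal{M}_C$'' is not a valid inference on its own: an injective (even open) map with image contained in $T^*\mathcal{M}_C$ need not be onto it.

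What you are missing is that the full symplectic matching is not needed. The map $\psi_p$ is linear in $v$ by construction, injective by Proposition \ref{prop3.1}, and its source and target have the same dimension: since $\mathcal{M}_C$ and $\mathcal{M}_S$ are smooth, $\Hom_{\mathbb{C}}(J/J^2|_p,\mathbb{C})\cong \ext^1_S(i_*E_p,i_*E_p)/\ext^1_C(E_p,E_p)\cong \Hom_C(E_p,E_p\otimes T^*C)$ by the standard exact sequence used in Section \ref{sec4} (equivalently, the Lagrangian condition plus Serre duality). Hence $\psi_p$ is bijective, every Higgs pair $(E_p,\phi)$ with $E_p$ stable occurs, and by Lemma \ref{lem3.2} the complement of the image is exactly the unstable locus. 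If instead you insist on proving your stronger matching by unwinding $\mathcal{F}$, you are essentially redoing (and strengthening) Section \ref{sec5}, which must be carried out rather than asserted. Two smaller slips: the sheaf $\mathcal{F}_x$ is supported on the spectral curve of $\phi_v$, not on the zero section of $T^*C$; and both the ``variety'' hypothesis of Theorem \ref{thm1} for $U=\mathcal{M}_{D_{C/S}/\mathbb{C}}$ and the density of the locus over $\mathbb{C}^*$ require that no component of $\mathcal{M}_{D_{C/S}/\mathbb{C}}$ lies over $0\in\mathbb{C}$ --- true here since $\mathcal{M}_{\text{GL(r)}}$ is irreducible and meets the image of $\psi$, but exactly this fails in the paper's K3-fibred threefold example, so it deserves a word.
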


We deduce from Corollary \ref{coro1} that:
\begin{coro}[Proposition \ref{prop4.1}]
There exists a Poisson structure on $\mathcal{M}_{D_{C/S}/\mathbb{C}}$ whose symplectic leaves are the fibres of $  \mathcal{M}_{D_{C/S}/\mathbb{C}}\to \mathbb{C},$ i.e. the Beauville--Mukai system $\mathcal{M}_{S}$ and the Hitchin system $\mathcal{M}_{\emph{GL(r)}}.$
\end{coro}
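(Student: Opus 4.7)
The strategy is to leverage Corollary \ref{coro1}, which identifies $D_{\mathcal{M}_C/\mathcal{M}_S}$ with an open dense subset of $\mathcal{M}_{D_{C/S}/\mathbb{C}}$ whose complement $Z$ lies in the special fibre $\mathcal{M}_{\text{GL(r)}}$ as the locus of Higgs bundles with unstable underlying bundle. The plan is (i) to construct a Poisson structure on $D_{\mathcal{M}_C/\mathcal{M}_S}$ using that $\mathcal{M}_C$ is Lagrangian in the holomorphic symplectic $\mathcal{M}_S$, and (ii) to extend it uniquely across $Z$ to all of $\mathcal{M}_{D_{C/S}/\mathbb{C}}$.

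For step (i), I invoke the following algebraic principle. If $L\hookrightarrow(M,\omega)$ is a Lagrangian submanifold of a holomorphic symplectic variety, the Lagrangian condition is equivalent to $\{I_L^a, I_L^b\}\subset I_L^{a+b-1}$ for the Poisson bracket induced by $\omega^{-1}$. This endows the Rees sheaf $\bigoplus_{n\ge 0} I_L^n t^{-n}$ with a Poisson bracket of weight $-1$, and the associated $\operatorname{Proj}$/blow-up realising $D_{L/M}$ inherits a holomorphic Poisson structure whose symplectic leaves are the fibres of $D_{L/M}\to \mathbb{C}$. The canonical isomorphism $N_{L/M}\cong T^*L$ induced by $\omega$ matches the form on the special leaf with the Liouville form. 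Specialising $L=\mathcal{M}_C$, $M=\mathcal{M}_S$ produces a Poisson structure on $D_{\mathcal{M}_C/\mathcal{M}_S}$ whose generic leaf is the Beauville--Mukai system $(\mathcal{M}_S,\omega_S)$ and whose special leaf is $T^*\mathcal{M}_C$ with its Liouville form, naturally embedded as a dense open in $\mathcal{M}_{\text{GL(r)}}$.

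For step (ii), I show $Z$ has codimension at least $2$ in the smooth ambient $\mathcal{M}_{D_{C/S}/\mathbb{C}}$. Since $T^*\mathcal{M}_C$ is Zariski dense in $\mathcal{M}_{\text{GL(r)}}$ of the same dimension, $Z$ has positive codimension in the special fibre, hence codimension at least $2$ globally. A holomorphic section of $\bigwedge^2 T_{\mathcal{M}_{D_{C/S}/\mathbb{C}}}$ defined off $Z$ then extends uniquely by Hartogs, and the Jacobi identity propagates by continuity. The extended bivector makes the full special fibre $\mathcal{M}_{\text{GL(r)}}$ a symplectic leaf, and its form restricts to the Liouville form on $T^*\mathcal{M}_C$; so it must coincide with Hitchin's form, which has the same restriction.

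The main obstacle I anticipate is the Lagrangian Rees principle in step (i): proving not only that a Poisson bivector exists on $D_{L/M}$ but that its symplectic foliation is exactly the projection to $\mathbb{C}$. I would carry this out in local coordinates on the blow-up, writing the Rees bracket in affine charts and checking both its non-degeneracy along each fibre and its vanishing in the $\partial_t$ direction. Once this is in place, the specialisation to $\mathcal{M}_C\subset \mathcal{M}_S$ is formal, and the comparison with the Hitchin form on the special fibre reduces to the classical statement that Hitchin's symplectic form restricts to the canonical Liouville form on the cotangent-bundle open subset of $\mathcal{M}_{\text{GL(r)}}$.
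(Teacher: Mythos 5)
Your proposal follows essentially the same route as the paper: the Lagrangian condition $\{J,J\}_\omega\subset J$ (hence $\{J^i,J^j\}_\omega\subset J^{i+j-1}$) yields a graded Poisson bracket on the extended Rees algebra, hence on $D_{\mathcal{M}_C/\mathcal{M}_S}$ with symplectic leaves the fibres over $\mathbb{C}$, and the bivector then extends uniquely to the smooth space $\mathcal{M}_{D_{C/S}/\mathbb{C}}$ because the complement lies in the special fibre with codimension at least $2$. The only cosmetic difference is that the paper writes the bracket explicitly as $\{t^{-i}f_i,t^{-j}g_j\}=t^{-i-j+1}\{f_i,g_j\}_\omega$ on $\Spec_{\mathcal{M}_S}(\oplus_{n\in\mathbb{Z}}t^nJ^{-n})$ (note you need the extended Rees algebra, including the nonnegative powers of $t$, not a $\operatorname{Proj}$), rather than verifying the construction in charts on the blow-up.
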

The relative symplectic/Poisson structure is also obtained by Franco \cite{franco} using different methods.

Let us now turn to the case of curves on abelian surfaces: $C\xhookrightarrow{i}A$. $\mathcal{M}_{A}$ is a holomorphic symplectic manifold, but is decomposable in the sense of Beauville--Bogomolov \cite{bby}. Debarre \cite{kum} showed there exists a codimension $4$ smooth subvariety $\ku$ inside $\mathcal{M}_{A}$, which is an irreducible symplectic (equivalently, hyperkähler) variety of generalized Kummer type.

It is natural to ask whether $\ku$ also degenerates to a subsystem of the GL(r) Hitchin integrable system for the curve $C$ of genus $g \geq 2$. We answer this affirmatively  by studying the (Hamiltonian) $\Pic^0(A)$ action on the moduli space of Higgs bundles $\mathcal{M}_{\text{GL(r)}}$, which is induced by the pull-back map: $\Pic^0(A)\xrightarrow{i^*} \Pic^0(C)$.

Denote the moment map of this $\Pic^0(A)$ action by $\mu$, which will be given explicitly in Section \ref{sec4} as the morphism (\ref{eq8}). The holomorphic symplectic reduction of $\mathcal{M}_{\text{GL(r)}}$, denoted by $\mathcal{N}^{\vee}$, is a symplectic variety with orbifold singularities.

Moreover, consider the morphism $$\tilde{\textbf{c}}_{1}:\mathcal{M}_{\text{GL(r)}} \xrightarrow[]{} \text{CH}_0(A)_0\cong A, (\mathcal{E},\phi)\longmapsto i_*c_1(\mathcal{E})-i_*c_1(E_0),$$ where $\mathcal{E}
$ is the underlying bundle and $E_0$ is as before a fixed stable bundle on $C.$ The map $\tilde{\textbf{c}}_{1}$ is an isotrivial fibration and its fibres give slices of the $\Pic^0(A)$ action. We denote the closed codimension $4$ subvariety $\tilde{\textbf{c}}_{1}^{-1}(0)\cap \mu^{-1}(0)\subset \mathcal{M}_{\text{GL(r)}}$ by $\mathcal{N}.$

\begin{theorem}[Proposition \ref{prop4.2}, \ref{prop4.4}]
The hyperkähler submanifold $\ku \subset \mathcal{M}_{A}$ degenerates to the smooth Lagrangian-fibred symplectic subvariety $\mathcal{N}$ in the moduli space $\mathcal{M}_{\emph{GL(r)}}$ of $\emph{GL(r)}$ stable Higgs bundles.

The symplectic subvariety $\mathcal{N}$ and the symplectic reduction $\mathcal{N}^{\vee}$ are dual Lagrangian fibrations, and there exists a finite surjective map: $\mathcal{N}\to \mathcal{N}^{\vee}.$  
\end{theorem}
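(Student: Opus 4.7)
The plan is to derive the degeneration by restricting the global family $\mathcal{M}_{D_{C/A}/\mathbb{C}}\to \mathbb{C}$ of Corollary~\ref{coro1} to the Kummer subvariety level, and then to analyze the resulting special fibre through the Hitchin fibration. The generalized Kummer is the fibre $\ku=a^{-1}(0,0)$ of the Albanese map $a=(a_1,a_2):\mathcal{M}_A\to A\times \hat{A}$, with $a_2=\det$ the holomorphic moment map for the $\hat{A}=\Pic^0(A)$-action by tensoring. By the projection formula $i_*\mathcal{E}\otimes L\cong i_*(\mathcal{E}\otimes i^*L)$, this action preserves $\mathcal{M}_C\hookrightarrow\mathcal{M}_A$, and by functoriality of the Rees and relative Spec construction of Section~\ref{sec2} it lifts canonically to $D_{\mathcal{M}_C/\mathcal{M}_A}$, and hence via $\psi$ to the global family $\mathcal{M}_{D_{C/A}/\mathbb{C}}$; its restriction to the special fibre agrees with the action of Section~\ref{sec4}. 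A Grothendieck--Riemann--Roch computation identifies $a_1|_{\mathcal{M}_C}$ with $\tilde{\textbf{c}}_1$ up to a constant (absorbed by the $i_*E_0$-normalization), and by uniqueness of the moment map the limit of $a_2$ on the special fibre is $\mu$. Pulling back $\mathcal{M}_{D_{C/A}/\mathbb{C}}\to \mathbb{C}$ along the constant section $(0,0)\in A\times\hat{A}$ thus produces a flat degeneration of $\ku$ to $\mathcal{N}=\tilde{\textbf{c}}_1^{-1}(0)\cap \mu^{-1}(0)$.

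\textbf{Smoothness and Lagrangian structure.} The map $(\tilde{\textbf{c}}_1,\mu)$ is submersive at $\mathcal{N}$: the $\Pic^0(A)$-action is free at stable points so $d\mu$ is surjective, $\tilde{\textbf{c}}_1$ is an isotrivial fibration so $d\tilde{\textbf{c}}_1$ is surjective, and the two differentials are complementary because the action changes $\tilde{\textbf{c}}_1$ by the nontrivial isogeny $\Pic^0(A)\to A,\; L\mapsto r\cdot i_*(i^*c_1(L))$. Hence $\mathcal{N}$ is smooth of codimension~$4$, and the restriction of the holomorphic symplectic form on $\mathcal{M}_{\text{GL}(r)}$ to $\mathcal{N}$ is nondegenerate by the standard fact that a slice transverse to the orbits inside the zero level set of the moment map inherits a symplectic structure. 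The Hitchin fibration $h$ is $\Pic^0(A)$-equivariant since tensoring preserves spectral data, so $h|_{\mathcal{N}}:\mathcal{N}\to B_H$ is a Lagrangian fibration with generic fibre an abelian subvariety $Q_b\subset \Jac(\tilde{C}_b)$.

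\textbf{Duality and the finite cover.} Let $P_b\subset \Jac(\tilde{C}_b)$ denote the image of $\Pic^0(A)$ via the spectral pullback $\Pic^0(A)\xrightarrow{i^*}\Pic^0(C)\to \Pic^0(\tilde{C}_b)$; a direct calculation using the complementarity established in the previous step shows that $P_b$ and $Q_b$ are complementary abelian subvarieties of $\mu^{-1}(0)\cap h^{-1}(b)$ with finite intersection. The composition $\mathcal{N}\hookrightarrow \mu^{-1}(0)\twoheadrightarrow \mathcal{N}^{\vee}$ has generic fibre $P_b\cap Q_b$, hence is finite, and is surjective because $P_b+Q_b=\mu^{-1}(0)\cap h^{-1}(b)$. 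The Poincaré line bundle on $\Jac(\tilde{C}_b)$ restricts to a duality between $Q_b$ and the quotient $(\mu^{-1}(0)\cap h^{-1}(b))/P_b$, which globalizes over $B_H$ to the dual Lagrangian fibration assertion.

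\textbf{Main obstacle.} The principal technical difficulty is the identification of $\mu$ as the specialization of $\det$ across the degeneration in the first step: this demands a careful compatibility of the relative Spec construction of Section~\ref{sec2} with the tensoring $\Pic^0(A)$-action and with the Atiyah-class identification underlying the isomorphism between $N_{\mathcal{M}_C/\mathcal{M}_A}$ and the open locus of $\mathcal{M}_{N_{C/A}}$ provided by Theorem~\ref{thm1}. The verification of the complementarity of $P_b$ and $Q_b$ inside $\Jac(\tilde{C}_b)$, rather than a mere isogeny, is a secondary but nontrivial check using the spectral cover $\tilde{C}_b\to C\hookrightarrow A$.
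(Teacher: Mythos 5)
Your overall architecture matches the paper's: restrict the Albanese map to the family $D_{\mathcal{M}_C/\mathcal{M}_A}$, identify its special-fibre limit with $(\tilde{\textbf{c}}_{1},\mu)$, define $\mathcal{N}$ as the common zero locus, exhibit $\mathcal{N}$ as a transverse slice of the $\Pic^0(A)$-action inside $\mu^{-1}(0)$ (whence symplectic, with a finite surjection onto the reduction $\mathcal{N}^{\vee}$), and prove duality fibrewise over the restricted Hitchin base by comparing the kernel of $\Jac(\widetilde{C}_b)\to A$ with the image/cokernel of $f^*:\Pic^0(A)\to\Jac(\widetilde{C}_b)$. Your transversality argument for smoothness and your fibrewise duality via complementary abelian subvarieties are fine and essentially the paper's Proposition \ref{prop4.4}, in places with more detail.

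The genuine gap is the step you yourself flag as the ``main obstacle'': identifying the special-fibre limit of the determinant component with $\mu$ ``by uniqueness of the moment map''. As stated this does not work. First, $\det$ takes values in the compact torus $A^{\vee}$, not in the dual of the Lie algebra $H^1(A,\mathcal{O}_A)^{*}$; on the compact holomorphic symplectic $\mathcal{M}_A$ the contractions $\iota_{X_\xi}\omega$ are closed but not exact $1$-forms, so there is no global vector-space-valued moment map for the $\Pic^0(A)$-action and ``uniqueness of the moment map'' has no purchase at that level. Second, even granting a torus-valued formulation and its compatibility with the degeneration (which itself requires the Poisson structure of Proposition \ref{prop4.1} and a lifted action), the uniqueness argument presupposes that the map $\mu$ of (\ref{eq8}) \emph{is} the moment map for the limiting $\Pic^0(A)$-action on $T^*\mathcal{M}_C$ --- but that is precisely a nontrivial identification which must be proved, so your argument is circular at the key point. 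The paper avoids this: since $\det$ is constant (equal to $0\in A^{\vee}$) on $\mathcal{M}_C$, its limit on the normal cone is literally its derivative along $\mathcal{M}_C$, i.e.\ the trace map $\ext^1_A(i_*\mathcal{E},i_*\mathcal{E})\to H^1(A,\mathcal{O}_A)$, which descends to $\mu$ on $\Hom_C(\mathcal{E},\mathcal{E}\otimes T^*C)$; and then, separately, Serre duality of the exact sequence $0\to\ext^1_C(\mathcal{E},\mathcal{E})\to\ext^1_A(i_*\mathcal{E},i_*\mathcal{E})\to\Hom_C(\mathcal{E},\mathcal{E}\otimes T^*C)\to0$ shows that $\mu$ coincides with the canonical cotangent-bundle moment map $\langle v,X_\xi\rangle$, which is what makes the symplectic-reduction part of your argument legitimate. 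To repair your write-up you should replace the ``uniqueness'' step by this derivative computation and add the Serre-duality verification that $\mu$ is Hamiltonian for the tensoring action.
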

It is observed by de Cataldo--Maulik--Shen \cite{sl} (see also \cite{franco}) that when $C\xhookrightarrow{}A$ is the Abel-Jacobi map of a genus 2 curve, $\mathcal{N}$ is precisely the moduli space of $\text{SL(r)}$ Higgs bundles on $C$. Its dual fibration $\mathcal{N}^{\vee}$ is the moduli of $\text{PGL(r)}$ Higgs bundles.

In the higher dimensional cases, as pointed out in \cite{DEL}, an interesting example is when $X$ is a hyperkähler manifold and $Y$ is a Lagrangian submanifold. The aforementioned Beauville--Mukai system is then replaced by the moduli space of Lagrangian sheaves \cite[Section 8]{DM} on $X$, while the Hitchin system is replaced by Simpson's moduli space of Higgs bundles \cite{sim} on $Y$.
\subsection*{An example: K3-fibred threefolds and 1-rigid classes}
One may wonder if the injective morphism $\psi$ in  (\ref{eq1}) is always an open immersion. The following example shows that in general this hope fails.

Let $X$ be a K3-fibred threefold over a smooth curve $C$ and $Y\xhookrightarrow{i}X$ be the smooth fibre over $0\in C.$ Suppose moreover the Chern character is
$$ 
ch=(0,\beta,1)\in H^{0}(Y)\oplus H^{2}(Y)\oplus H^{4}(Y),
$$ and that any curve on $X$ with fundamental class $i_*\beta$ is integral and is contained in $Y\subset X.$ Such (effective) $\beta$ can be chosen from 1-rigid classes with respect to the smooth family $X\to C,$ in the sense of \cite[Section 0.6]{kkv}.

Since curves with class $i_*\beta$ are integral and can not move outside $Y,$ the stable sheaves on $X$ are scheme-theoretically supported on the K3 surface $Y,$ making $\mathcal{M}_{X}=\mathcal{M}_{Y}$ smooth. $\mathcal{M}_{D_{Y/X}/\mathbb{C}}$ has two irreducible components, one parametrizing sheaves on $X\times \mathbb{C}^*\subset D_{Y/X}$ and the other parametrizing sheaves on the special fibre $Y\times \mathbb{C}.$ The two components have the same dimension and intersect nontrivially. The morphism $\psi$ maps bijectively to the first irreducible component and is not an open immersion.

In general, whenever $\mathcal{M}_X$ is a variety, $D_{\mathcal{M}_Y/\mathcal{M}_X}$ is also a variety \cite[II, 7.16]{hartshorn}. The image of $\psi$
is then contained in a single irreducible component of the relative moduli $\mathcal{M}_{D_{Y/X}/\mathbb{C}}.$

\subsection*{Notation and convention}
We denote (possibly twisted) universal sheaves on $X\times \mathcal{M}_X$ and $Y\times \mathcal{M}_Y$ by $F$ and $E$ respectively. We choose a generic polarization $\mathcal{O}(1)$ on $X,$ and $D_{Y/X}$ is polarized by the pull-back of $\mathcal{O}(1)$ via $D_{Y/X}\subset \bl_{Y\times 0}(X\times \mathbb{C})\to X.$

The ideal sheaf of the smooth closed subscheme $Y\xhookrightarrow{i}X$ is $I$ and the ideal sheaf of $\mathcal{M}_Y\xhookrightarrow{j}\mathcal{M}_X$ is $J.$ The projection $A\times B\to A$ is  $\pi_A$. We use the same notation for a morphism and its base change, and omit the pull-back symbol when it is clear from the context. The derived dual $R\mathcal{H}om(\mathcal{L},\mathcal{O})$ of a complex $\mathcal{L}$ is denoted by $\mathcal{L}^{\vee},$ and the underived dual of a sheaf $L$ is $L^*.$ 
\subsection*{Acknowledgement}
I thank my supervisors Richard Thomas and Soheyla Feyzbakhsh for many helpful discussions and suggestions.

I also would like to thank Riccardo Carini, Daniel Huybrechts, Mirko Mauri, Tony Pantev and Junliang Shen for related conversations and comments, as well as the organizers of the conferences “Algebraic Geometry: Wall Crossing and Moduli Spaces, Varieties and Derived Categories" and “Stratifications of Higgs bundle moduli spaces and related topics" where parts of the work were presented.

This work was financially supported by the Engineering and Physical Sciences Research Council [EP/S021590/1], the EPSRC Centre for Doctoral Training in
Geometry and Number Theory (The London School of Geometry and Number
Theory),  University College London.
\section{Deformation to the normal cone}
\label{sec2}
In this section, we construct the morphism $\psi$
in Theorem \ref{thm1}. We first recall the description of the deformation to the normal cone via Rees algebras. For more discussions on Rees algebras, deformation to the normal cone and applications to intersection theory, see \cite[Section 2.3]{CG} and \cite[Section 5.1]{ful}.

Set $I^i$ to be $\mathcal{O}_{X}$ if $i \leq 0,$ the Rees algebra is by definition $\oplus_{n\in \mathbb{Z}} t^n I^{-n}.$ Consider the relative spectrum
$$D_{Y/X,\text{Rees}}:=\Spec_{X} (\oplus_{n\in \mathbb{Z}} t^n I^{-n} ).
$$ The inclusion $$\mathcal{O}_{X}\otimes \mathbb{C}[t] \To \oplus_{n\in \mathbb{Z}} t^n I^{-n} $$ makes $\oplus_{n\in \mathbb{Z}} t^n I^{-n} $ a quasi-coherent sheaf on $X$ of $\mathcal{O}_{X}[t]$ algebras and gives a morphism $$D_{Y/X,\text{Rees}} \To X\times \mathbb{C}.$$ 

\begin{lemma}[{\cite[Section 5.1]{ful}}]
The above morphism: $D_{Y/X, \emph{Rees}}\to X\times \mathbb{C}$ is identical to the blow-down map $$
D_{Y/X}\subset \bl_{Y\times 0}(X\times \mathbb{C})\To X\times \mathbb{C}.
$$
In particular, $D_{Y/X}\to X\times \mathbb{C}$ is affine.
\end{lemma}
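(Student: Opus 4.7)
The plan is to reduce the lemma to an affine-local computation, using smoothness of $Y\subset X$ to ensure $I$ is locally generated by a regular sequence.

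Pick an affine open $U=\Spec(A)\subset X$ on which $I$ is generated by a regular sequence $f_1,\dots,f_r$. Working inside $A[t,t^{-1}]$, I would first identify the Rees algebra with an explicit $A$-subalgebra:
\[
R_U \;:=\; \bigoplus_{n\in\mathbb{Z}} t^n\, I^{-n}(U) \;=\; A\bigl[\,t,\; f_1/t,\dots, f_r/t\,\bigr].
\]
The inclusion ``$\supseteq$'' is immediate from $t\in t^1\mathcal{O}_X$ and $f_i/t\in t^{-1}I$; for ``$\subseteq$'', any generator $f_{i_1}\cdots f_{i_{-n}}\, t^n$ of $t^n I^{-n}$ (for $n<0$) rewrites as $\prod_j (f_{i_j}/t)$, and the $n\ge 0$ pieces are generated by powers of $t$.

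Next, I would identify this algebra with an affine chart of the blow-up. Over $U\times\mathbb{C}$, $\bl_{Y\times 0}(X\times\mathbb{C}) = \operatorname{Proj}\bigoplus_{n\ge 0}(f_1,\dots,f_r,t)^n$, and inverting the degree-one element $t$ and taking the degree-zero part yields the distinguished affine chart
\[
D_+(t) \;=\; \Spec\,A\bigl[\,t,\, f_1/t,\dots,f_r/t\,\bigr] \;=\; \Spec R_U.
\]
The inclusion $A[t]\hookrightarrow R_U$ then induces precisely the restriction of the blow-down to $D_+(t)$.

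Finally, I would check that $D_+(t)$ is exactly the part of the blow-up retained by $D_{Y/X}$, i.e.\ the complement of the strict transform of $X\times 0$. In $D_+(t)$ the identity $f_i = t\cdot(f_i/t)$ forces the pullback of the ideal $(f_i,t)$ to equal $(t)$, so the exceptional divisor $V(t)$ coincides with the total transform of $X\times 0$ and the strict transform is empty here. In any remaining chart $D_+(f_i)$, the pullback of $t$ factors as $f_i\cdot(t/f_i)$, where $f_i$ cuts out the exceptional divisor and $t/f_i$ cuts out the strict transform of $X\times 0$; removing the latter yields the open subset $D_+(f_i)\cap D_+(t)\subset D_+(t)$. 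Hence $D_{Y/X}\cap(U\times\mathbb{C}) = D_+(t)$. The local identifications are canonical inside $\mathcal{O}_X[t,t^{-1}]$ and thus glue over $X$ to an isomorphism $D_{Y/X,\text{Rees}}\xrightarrow{\sim} D_{Y/X}$ compatible with the maps to $X\times\mathbb{C}$; affineness is then immediate from the relative $\Spec$ presentation. The main (mild) technical point is this strict-transform-excision step; everything else is a direct unpacking of definitions.
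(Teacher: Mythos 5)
Your argument is correct: the paper itself gives no proof of this lemma beyond the citation to Fulton, Section 5.1, and your chart-by-chart verification is exactly the standard argument behind that citation (valid here since $X,Y$ are smooth, so $I$ is locally generated by a regular sequence, and the identification $D_+(t)=\Spec A[t,f_1/t,\dots,f_r/t]$ inside $A[t,t^{-1}]$ is legitimate because $t$ is a nonzerodivisor on the Rees algebra). The only step worth tightening is the excision: to see the strict transform misses $D_+(t)$, argue that the preimage of $(X\times 0)\setminus(Y\times 0)$ is disjoint from the open chart $D_+(t)$ (there $t=0$ forces all $f_i=0$) and hence so is its closure; and in a chart $D_+(f_i)$, justify that $V(t/f_i)$ is exactly the strict transform by noting $\operatorname{ord}_E(\pi^*t)=1$, so $\operatorname{div}(t/f_i)$ has no component inside the exceptional divisor $V(f_i)$ and its support is the closure of the preimage of $(X\times 0)\setminus(Y\times 0)$ in that chart.
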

The $\mathbb{Z}$-grading on $\oplus_{n\in \mathbb{Z}} t^n I^{-n}$ induces a $\mathbb{C}^*$ action on $D_{Y/X}$. The variable $t$ can be viewed as the $\mathbb{C}^*$ equivariant parameter and $t^n I^{-n}$ is the sheaf of equivariant regular functions on $D_{Y/X}$ with weight $n$.

Now we construct a family of stable sheaves $\mathcal{F}$ on fibres of $
D_{Y/X}\to \mathbb{C},$ which is flat over the base $D_{\mathcal{M}_Y/\mathcal{M}_X}$ and induces the classifying map
$$
\psi:D_{\mathcal{M}_Y/\mathcal{M}_X} \To \mathcal{M}_{D_{Y/X}/\mathbb{C}}.
$$

\begin{prop} \label{prop2.1}
Let $F$ be a universal sheaf \footnote{Strictly speaking, $F$ is a twisted sheaf on $X\times \mathcal{M}_{X}$ for some Brauer class $\alpha\in H^2(\mathcal{O}^*_{X\times \mathcal{M}_{X}})$ pulled back from $\mathcal{M}_{X}$, but throughout the paper we simply call $F$ a sheaf.} on $X\times \mathcal{M}_{X}$, there exists a flat $D_{\mathcal{M}_Y/\mathcal{M}_X}$-family of stable sheaves $\mathcal{F}$ on the fibres of $D_{Y/X}\to \mathbb{C}$, extending the pull-back of $F$ to $$X\times \mathcal{M}_{X}\times \mathbb{C}^*\subset D_{Y/X}\times_{\mathbb{C}}D_{\mathcal{M}_Y/\mathcal{M}_X}.$$ $\mathcal{F}$ induces the morphism $\psi$ in \emph{(\ref{eq1})}.
\end{prop}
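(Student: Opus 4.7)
I build $\mathcal{F}$ by applying a Rees module construction to $F$. The key observation is $I \cdot F \subseteq J \cdot F$: since $F|_{X \times \mathcal{M}_Y} = i_*E$ is annihilated by $I$, the submodule $IF$ vanishes on $X \times \mathcal{M}_Y$ and hence lies in $JF$. A short induction gives $I^n F \subseteq J^n F$ for all $n \geq 0$, so the $I$-adic, $J$-adic and $(I+J)$-adic filtrations on $F$ all agree: $(I+J)^n F = J^n F$. Using Lemma 2.1 on both factors, the tensor $R_Y \otimes_{\mathbb{C}[t]} R_{\mathcal{M}}$ is the Rees algebra $\bigoplus_n t^n (I+J)^{-n}$ of the ideal $I + J \subset \mathcal{O}_{X \times \mathcal{M}_X}$ cutting out $Y \times \mathcal{M}_Y$, and its relative Spec over $X \times \mathcal{M}_X$ is $D_{Y/X} \times_{\mathbb{C}} D_{\mathcal{M}_Y/\mathcal{M}_X}$.

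\textbf{The construction.} I define the $\mathbb{Z}$-graded quasi-coherent sheaf on $X \times \mathcal{M}_X$ by
\begin{equation*}
\mathcal{F} := \bigoplus_{n \in \mathbb{Z}} t^n \cdot J^{-n} F = \bigoplus_{n \in \mathbb{Z}} t^n \cdot (I+J)^{-n} F,
\end{equation*}
with the convention $J^{-n} F := F$ for $n \geq 0$. The $R_{\mathcal{M}}$-action is manifest; the $R_Y$-action is well-defined thanks to the key observation, since multiplication by $t^{-k} I^k$ sends $t^m J^{-m} F$ into $t^{m-k} I^k J^{-m} F \subseteq t^{m-k} J^{k-m} F$, the correct graded piece. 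These actions patch into an $(R_Y \otimes_{\mathbb{C}[t]} R_{\mathcal{M}})$-module structure, and via the relative Spec identification $\mathcal{F}$ becomes a quasi-coherent sheaf on $D_{Y/X} \times_{\mathbb{C}} D_{\mathcal{M}_Y/\mathcal{M}_X}$. Localizing at $t$ yields $\mathcal{F}[t^{-1}] = F \otimes_{\mathbb{C}} \mathbb{C}[t,t^{-1}]$, so $\mathcal{F}$ extends the pullback of $F$ over $X \times \mathcal{M}_X \times \mathbb{C}^*$.

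\textbf{Flatness, stability and the classifying map.} The remaining task is to verify that $\mathcal{F}$ is a flat $D_{\mathcal{M}_Y/\mathcal{M}_X}$-family of stable sheaves on the fibres of $D_{Y/X} \to \mathbb{C}$, so that $\psi$ arises from the universal property of $\mathcal{M}_{D_{Y/X}/\mathbb{C}}$. Flatness over $D_{\mathcal{M}_Y/\mathcal{M}_X}$ will be reduced to constancy of fibrewise Hilbert polynomials: on the $t \neq 0$ locus this is inherited from $F$, and over $t = 0$ the $\mathbb{C}^*$-equivariance of the Rees grading forces the same polynomial. For stability, the $t \neq 0$ fibres are stable by hypothesis; at $(m,v) \in N_{\mathcal{M}_Y/\mathcal{M}_X}$ over $t = 0$, the fibre is a sheaf on $N_{Y/X}$ whose pushforward along $\pi : N_{Y/X} \to Y$ equals $E_m$, and since the polarization on $N_{Y/X}$ is the pullback $\pi^*\mathcal{O}_Y(1)$, any destabilizing subsheaf would push down to destabilize $E_m$ on $Y$. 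The main technical obstacle is the precise identification of the $t = 0$ fibres at $v \neq 0$: while $v = 0$ recovers $(i_N)_*E_m$, for $v \neq 0$ the fibre is a non-zero-section-supported ``spectral'' deformation of $i_*E_m$ on $N_{Y/X}$, and one must carefully check that the Rees construction produces a well-defined coherent sheaf and that it is stable. The Higgs-bundle model underlying Corollary 1 is the guiding picture.
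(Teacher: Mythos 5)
Your construction of $\mathcal{F}$ coincides with the paper's: the key inclusion $IF\subseteq JF$ (because $F/JF$ is scheme-theoretically supported on $Y\times\mathcal{M}_Y$), the graded module $\oplus_n t^{-n}J^nF$ made into a module over the tensor product of the two extended Rees algebras, and relative Spec. (Two small remarks: only $(I+J)^nF=J^nF$ follows from $IF\subseteq JF$ — the $I$-adic filtration is merely \emph{contained} in the $J$-adic one, not equal to it — and the identification of $R_Y\otimes_{\mathbb{C}[t]}R_{\mathcal{M}}$ with the Rees algebra of $I+J$ is true but neither proved nor needed; the module action you wrote down is all that is used.) The genuine gap is in your final paragraph, where the actual content of the proposition — that $\mathcal{F}$ is a flat $D_{\mathcal{M}_Y/\mathcal{M}_X}$-family of stable sheaves — is left as a ``remaining task'' with a plan that does not work as stated: constancy of fibrewise Hilbert polynomials implies flatness only over a reduced (integral) base, and here the base $D_{\mathcal{M}_Y/\mathcal{M}_X}$ (like $\mathcal{M}_X$ itself and the cone $C_{\mathcal{M}_Y/\mathcal{M}_X}$) can be singular, reducible, even nonreduced for general $X,Y$ and Chern characters; moreover you cannot compute those Hilbert polynomials over $t=0$ without the identification of the special fibres that you yourself flag as the unresolved ``main technical obstacle.''

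What closes the gap — and is how the paper argues — is that no fibrewise analysis is needed. By construction, the pushforward of $\mathcal{F}$ along the affine morphism $D_{Y/X}\times_{\mathbb{C}}D_{\mathcal{M}_Y/\mathcal{M}_X}\to X\times D_{\mathcal{M}_Y/\mathcal{M}_X}$ is exactly the pullback of $F$: indeed $\oplus_n t^{-n}J^nF\cong(\oplus_n t^{-n}J^n)\otimes_{\mathcal{O}_{\mathcal{M}_X}}F$ because $F$ is flat over $\mathcal{M}_X$, and the right-hand side is $F\otimes_{\mathcal{O}_{X\times\mathcal{M}_X}}\mathcal{O}_{X\times D_{\mathcal{M}_Y/\mathcal{M}_X}}$. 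Since pushforward along an affine morphism does not change the underlying module structure over the base and commutes with arbitrary base change, flatness of $\mathcal{F}$ over $D_{\mathcal{M}_Y/\mathcal{M}_X}$ is immediate from flatness of $F$ over $\mathcal{M}_X$, and the fibre $\mathcal{F}_x$ at any point $x$ over $t=0$ lying over $p\in\mathcal{M}_Y$ pushes forward along $N_{Y/X}\to Y\hookrightarrow X$ to $i_*E_p$. Stability of $\mathcal{F}_x$ then follows from stability of $E_p$ exactly as in your sketch, because the polarization on the fibres of $D_{Y/X}\to\mathbb{C}$ is pulled back from $X$ (this is the spectral correspondence, Lemma 3.1 of the paper), so there is no ``well-definedness'' or case analysis at $v\neq0$ left to do.
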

\begin{proof}
Recall that $J$ is the ideal sheaf of $\mathcal{M}_{Y}$ in $\mathcal{M}_{X}$, we know $$
D_{\mathcal{M}_Y/\mathcal{M}_X} =\Spec_{\mathcal{M}_{X}}(\oplus_{n\in \mathbb{Z}} t^n J^{-n})
$$ and $$
D_{Y/X}\times_{\mathbb{C}}D_{\mathcal{M}_Y/\mathcal{M}_X}=\Spec_{X\times \mathcal{M}_{X}}\bigl((\oplus _{n\in \mathbb{Z}}t^{-n}I^n) \otimes_{\mathbb{C}[t]} (\oplus _{m\in \mathbb{Z}} t^{-m}J^m)\bigr).
$$

We now construct a sheaf $\mathcal{F}$ on $D_{Y/X}\times_{\mathbb{C}}D_{\mathcal{M}_Y/\mathcal{M}_X},$ which is flat over $D_{\mathcal{M}_Y/\mathcal{M}_X}$ and pushes forward to $$\oplus _{n\in \mathbb{Z}}t^{-n}J^n\otimes F$$ via the affine morphism $$
D_{Y/X}\times_{\mathbb{C}}D_{\mathcal{M}_Y/\mathcal{M}_X}\To X\times \mathcal{M}_X.
$$ 

The pull-back of $F$ via the closed immersion $X\times\mathcal{M}_Y \xhookrightarrow{j} X\times \mathcal{M}_X,$ which is $F/JF$, is a flat $\mathcal{M}_Y$-family of sheaves scheme-theoretically supported on $Y\subset X.$ The image of $IF \subset F\to F/JF$ is therefore zero and we have the morphism
\begin{equation}
\label{eq2}
I\otimes F \To JF\cong J\otimes F,
\end{equation} from which we get a multiplication action by the sheaf of algebras $$(\oplus _{n\in \mathbb{Z}}t^{-n}I^n) \otimes_{\mathbb{C}[t]} (\oplus _{m\in \mathbb{Z}} t^{-m}J^m)$$ on 
$\oplus _{n\in \mathbb{Z}}t^{-n}J^n\otimes F.$ The relative Spec construction then produces the desired sheaf $\mathcal{F}$ on $D_{Y/X}\times_{\mathbb{C}}D_{\mathcal{M}_Y/\mathcal{M}_X}=\Spec_{X\times \mathcal{M}_{X}}\bigl((\oplus _{n\in \mathbb{Z}}t^{-n}I^n) \otimes_{\mathbb{C}[t]} (\oplus _{m\in \mathbb{Z}} t^{-m}J^m)\bigr).$ $\mathcal{F}$ is automatically a flat $D_{\mathcal{M}_Y/\mathcal{M}_X}$-family of stable sheaves since $F$ is a flat $\mathcal{M}_X$-family of stable sheaves.
\end{proof}
\begin{rmk}
The push-forward of $\mathcal{F}$ via $$D_{Y/X}\times_{\mathbb{C}}D_{\mathcal{M}_Y/\mathcal{M}_X}\To X\times D_{\mathcal{M}_Y/\mathcal{M}_X}$$ is the pull-back of $F$ via $$X\times D_{\mathcal{M}_Y/\mathcal{M}_X}\To X\times \mathcal{M}_X.$$ 
\end{rmk}

\section{Spectral correspondence}
In this section, we show that the morphism $\psi$ constructed in Proposition \ref{prop2.1} is injective. Our argument relies on the key Proposition \ref{prop3.1}, whose proof is deferred to Section \ref{sec5}.

Note that $\psi$ is a morphism of schemes over the affine line $\mathbb{C}.$ It restricts to an isomorphism on the open subsets over $\mathbb{C}^*\subset \mathbb{C}.$ On the special fibre over $0\in \mathbb{C},$ $\psi$ becomes 
\begin{equation*}
C_{\mathcal{M}_Y/\mathcal{M}_X}\To \mathcal{M}_{N_{Y/X}},  
\end{equation*}
where $C_{\mathcal{M}_Y/\mathcal{M}_X}$ is the normal cone of $\mathcal{M}_Y$ in $\mathcal{M}_X.$ We apply the spectral correspondence (as used in the theory of Higgs bundles) to describe the stable sheaves on $N_{Y/X}$ which correspond to the image of $C_{\mathcal{M}_Y/\mathcal{M}_X}\to \mathcal{M}_{N_{Y/X}}.$  
\begin{lemma}[Spectral correspondence]
Let $V\xrightarrow[]{\pi}Y$ be a vector bundle over a smooth polarized variety $\bigl(Y,\mathcal{O}_{Y}(1)\bigr)$. A coherent sheaf $G$ on $V$ with compact support corresponds to a unique Higgs pair $(\pi_*G,\phi)$, where $\phi$ is the Higgs field:$$
\pi_*G \To \pi_*G\otimes V
$$ induced by the multiplications of $\mathcal{O}_{V}$ on $G.$

Moreover, $G$ is stable with respect to $\pi^*\mathcal{O}_{Y}(1)$ if $\pi_*G$ is stable with respect to $\mathcal{O}_{Y}(1).$
\end{lemma}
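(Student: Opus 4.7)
The plan is to prove this in two steps: first establish the equivalence of categories via the affine structure of $\pi$, and then transfer stability via a Hilbert polynomial comparison.

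First I would identify $V$ with the relative spectrum $\Spec_{Y}(\sym^{\bullet}V^{*})$ so that $\pi$ is affine, and the pushforward $\pi_*$ gives an equivalence between quasi-coherent $\mathcal{O}_{V}$-modules and quasi-coherent $\sym^{\bullet}V^{*}$-modules on $Y$. Under this equivalence, $G$ has compact support if and only if $\pi_*G$ is coherent as an $\mathcal{O}_{Y}$-module, since $\pi$ restricted to $\Supp G$ is then finite. A $\sym^{\bullet}V^{*}$-module structure on a coherent $\mathcal{O}_{Y}$-module $M$ is determined by the action of its degree-one generators $V^{*}\otimes M \to M$, equivalently by an $\mathcal{O}_{Y}$-linear map $\phi:M\to M\otimes V$; the requirement that such a $\phi$ extends to an action of the whole symmetric algebra is the integrability condition $\phi\wedge\phi=0$ in $M\otimes\wedge^{2}V$, which is automatic when $\phi$ comes from multiplication by genuine functions on $V$ since those commute. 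This produces the Higgs pair $(\pi_*G,\phi)$ uniquely associated to $G$, with $\phi$ precisely the map described in the statement.

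Next I would transfer stability using the projection formula. Because $\pi$ is affine, $\pi_*$ is exact and the higher pushforwards vanish, so
\begin{equation*}
\chi\bigl(G\otimes\pi^{*}\mathcal{O}_{Y}(n)\bigr)=\chi\bigl(\pi_*G\otimes\mathcal{O}_{Y}(n)\bigr)
\end{equation*}
for every integer $n$. Hence $G$ and $\pi_*G$ share the same Hilbert polynomial, and the same holds for any coherent subsheaf $G'\subset G$ on $V$ and its exact pushforward $\pi_*G'\subset \pi_*G$, together with the reduced Hilbert polynomials. Consequently any Gieseker-destabilising subsheaf of $G$ would produce a Gieseker-destabilising submodule of $\pi_*G$, so stability of $\pi_*G$ with respect to $\mathcal{O}_{Y}(1)$ forces stability of $G$ with respect to $\pi^{*}\mathcal{O}_{Y}(1)$.

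The main obstacle is making the first step completely precise: verifying that $\phi\wedge\phi=0$ is exactly the condition required to upgrade a linear map $\phi:M\to M\otimes V$ into a genuine $\sym^{\bullet}V^{*}$-action on $M$, and that the assignment $G\mapsto (\pi_*G,\phi)$ is a genuine equivalence of categories rather than merely a functorial correspondence (one needs to exhibit a quasi-inverse by viewing a Higgs pair as a module over the symmetric algebra and applying $\Spec_{Y}$). Once this dictionary is in place, the stability transfer in the second step is formal, depending only on $\pi$ being affine together with the projection formula.
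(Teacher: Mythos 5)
Your proposal is correct and takes essentially the same approach as the paper's (one-sentence) proof: identify $V$ with $\Spec_Y(\sym(V^*))$, note the symmetric algebra is generated in degree one, so the Higgs field determines the $\mathcal{O}_V$-module structure on $G$. Your second step, transferring stability via exactness of $\pi_*$ for the affine morphism $\pi$ and the projection formula applied to subsheaves, is a correct and welcome filling-in of the stability claim, which the paper's proof leaves implicit.
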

\begin{proof}
The space $V=\Spec_Y(\sym(V^*))$ and the symmetric algebra is generated by $V^*$, therefore the Higgs field $\phi$ determines the multiplications of $\mathcal{O}_{V}$ on $G$.
\end{proof}
Given any closed point $x=(p,v)
\in C_{\mathcal{M}_Y/\mathcal{M}_X}$ where $p\in \mathcal{M}_Y$ and $v$ is a normal vector at $p,$ we denote the restriction of the sheaf $\mathcal{F}$ (constructed in Proposition \ref{prop2.1}) to $N_{Y/X}\times x$ by $\mathcal{F}_{x}.$ The normal vector $v$ is a morphism $$
v:(J/J^2)|_{p} \To \mathbb{C}
$$ since $J$ is the ideal sheaf of $\mathcal{M}_Y$ in $\mathcal{M}_X.$ 

Recall that we denote a universal sheaf on $Y\times \mathcal{M}_{Y}$ by $E$. Let $E_p$ be the stable sheaf on $Y$ corresponding to $p\in \mathcal{M}_Y.$ The morphism (\ref{eq2}) induces:
\begin{equation} \label{eq3}
I/I^2\otimes F \To J/J^2\otimes F,
\end{equation}
whose
composition with $v$ gives a Higgs field
\begin{equation}\label{eq4}
\phi_v: I/I^2\otimes E_{p} \To E_{p}.
\end{equation}
It is clear that:
\begin{lemma} \label{lem3.2}
Via the spectral correspondence, the sheaf $\mathcal{F}_{x}$ is identified with the Higgs pair $(E_p,\phi_v).$    
\end{lemma}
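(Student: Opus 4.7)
The plan is to verify the two defining features of the spectral correspondence for $\mathcal{F}_x$: (i) that $\pi_*\mathcal{F}_x\cong E_p$ as sheaves on $Y$, and (ii) that the multiplication by $\mathcal{O}_{N_{Y/X}}=\oplus_n I^n/I^{n+1}$ on $\mathcal{F}_x$ recovers the Higgs field $\phi_v$. Both will be extracted directly from the Rees-algebra description of $\mathcal{F}$ in Proposition~\ref{prop2.1}.

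For (i), I would first push $\mathcal{F}$ forward along the affine morphism $D_{Y/X}\times_\mathbb{C}D_{\mathcal{M}_Y/\mathcal{M}_X}\to X\times \mathcal{M}_X$ to obtain $\bigoplus_{n\in\mathbb{Z}} t^{-n}J^n\otimes F$, and then restrict to the fibre over $0\in\mathbb{C}$ by setting $t=0$. Only non-negative degrees survive, and since $F|_{X\times\mathcal{M}_Y}=i_*E$, the result is the $\bar B$-module
\[
  \mathcal{G} \;=\; \bigoplus_{n\geq 0}(J^n/J^{n+1})\otimes_{\mathcal{O}_{\mathcal{M}_Y}}E \;\cong\; \bar B\otimes_{\mathcal{O}_{\mathcal{M}_Y}}E
\]
on $Y\times\mathcal{M}_Y$, where $\bar B=\oplus_n J^n/J^{n+1}$ is the structure sheaf of $C_{\mathcal{M}_Y/\mathcal{M}_X}\to \mathcal{M}_Y$. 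Flat base change for the vector bundle projection $\pi:N_{Y/X}\to Y$ then identifies $\pi_*\mathcal{F}_x$ with $\mathcal{G}\otimes_{\bar B}\mathbb{C}_x$. Since the composite $\mathcal{O}_{\mathcal{M}_Y}\to \bar B\to \mathbb{C}_x$ factors through the residue field $\mathbb{C}_p$ (as $t=0$ at $x$ and $x$ lies over $p$), this simplifies to $E\otimes_{\mathcal{O}_{\mathcal{M}_Y}}\mathbb{C}_p=E_p$, establishing (i).

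For (ii), the $\bar A=\oplus_n I^n/I^{n+1}$-action on $\mathcal{G}$ encoding the $\mathcal{O}_{N_{Y/X}}$-multiplication is $\bar B$-linear, so it is determined by its value on the degree-$0$ generator $E\subset \mathcal{G}$. By the construction in Proposition~\ref{prop2.1}, this generator-level action in degree $1$ of $\bar A$ is exactly the morphism~(\ref{eq3}): $(I/I^2)\otimes E\to (J/J^2)\otimes E$. After tensoring with $\mathbb{C}_x$ over $\bar B$, the $J/J^2$-factor is collapsed via $v$, and the resulting action of $I/I^2$ on $E_p$ becomes
\[
  (I/I^2)\otimes E_p \;\longrightarrow\; (J/J^2)|_p\otimes E_p \;\xrightarrow{\,v\otimes\mathrm{id}\,}\; E_p,
\]
with the first arrow obtained by restricting (\ref{eq3}) to $p$; this composite is precisely $\phi_v$ by definition~(\ref{eq4}). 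The main technical care lies in verifying that the three operations---restriction to $t=0$, specialisation to $x$ in the $D_{\mathcal{M}_Y/\mathcal{M}_X}$-direction, and push-forward along $\pi$---all commute, which follows from flatness of $F$ over $\mathcal{M}_X$, flatness of $\pi$, and the free $\bar B$-module identification $\mathcal{G}\cong \bar B\otimes_{\mathcal{O}_{\mathcal{M}_Y}}E$.
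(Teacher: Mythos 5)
Your verification is correct and is precisely the unpacking of the Rees-algebra construction of $\mathcal{F}$ in Proposition~\ref{prop2.1} that the paper implicitly relies on (the paper offers no argument beyond ``it is clear''): setting $t=0$ turns the Rees module $\oplus_n t^{-n}J^n\otimes F$ into $\bigl(\oplus_n J^n/J^{n+1}\bigr)\otimes E$, specialising at $x$ gives $E_p$, and the degree-one $I/I^2$-action is exactly (\ref{eq3}) composed with $v$, i.e.\ $\phi_v$ of (\ref{eq4}). The only cosmetic point is that the appeals to flat base change are stronger than needed, since pushforward along the affine morphisms involved commutes with arbitrary base change of quasi-coherent modules.
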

Restricting to the closed point $p\in\mathcal{M}_Y$, the map (\ref{eq3}) induces the morphism 
\begin{equation} \label{eq5}
\psi_{p}: \Hom_{\mathbb{C}}(J/J^2|_p,\mathbb{C})\To \Hom_{Y}(E_p,E_p\otimes N_{Y/X}),
\end{equation}
sending a normal vector $v$ to the Higgs field $\phi_v.$

\begin{prop}\label{prop3.1}
The map $\psi_{p}$ is injective.
\end{prop}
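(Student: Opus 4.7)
The plan is to identify $\psi_p$ with the injective edge homomorphism in the local-to-global Ext spectral sequence associated with the closed embedding $i:Y \hookrightarrow X$; once this identification is made, injectivity is automatic. To identify the source of $\psi_p$ via deformation theory, I would invoke the Kodaira--Spencer isomorphism for moduli of stable sheaves: $T_p\mathcal{M}_Y \cong \ext^1_Y(E_p,E_p)$ and $T_p\mathcal{M}_X \cong \ext^1_X(i_*E_p,i_*E_p)$, with the differential of the closed immersion $j:\mathcal{M}_Y\hookrightarrow\mathcal{M}_X$ given by the pushforward $i_*$. Dualizing the conormal exact sequence then yields
\begin{equation*}
\Hom_{\mathbb{C}}(J/J^2|_p,\mathbb{C}) \cong T_p\mathcal{M}_X/T_p\mathcal{M}_Y \cong \mathrm{coker}\bigl(i_*:\ext^1_Y(E_p,E_p) \to \ext^1_X(i_*E_p,i_*E_p)\bigr).
\end{equation*}

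To identify a natural target containing this cokernel, I would use the adjunction $R\Hom_X(i_*E_p, i_*E_p) \cong R\Hom_Y(Li^*i_*E_p, E_p)$ together with the Koszul computation $\mathcal{H}^{-q}(Li^*i_*E_p) \cong E_p \otimes \Lambda^q N_{Y/X}^*$. These yield a hypercohomology spectral sequence
\begin{equation*}
E_2^{a,b} = \ext^a_Y(E_p, E_p \otimes \Lambda^b N_{Y/X}) \Longrightarrow \ext^{a+b}_X(i_*E_p, i_*E_p),
\end{equation*}
whose low-degree exact sequence realizes $\mathrm{coker}(i_*)$ as $\ker(d_2)\subset \Hom_Y(E_p, E_p \otimes N_{Y/X})$, hence produces a canonical injection
\begin{equation*}
\mathrm{coker}(i_*) \hookrightarrow \Hom_Y(E_p, E_p \otimes N_{Y/X}).
\end{equation*}

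The core of the argument, and the main obstacle, is to verify that the composition of the two identifications above agrees with $\psi_p$. To do so, I would interpret the map (\ref{eq3}) as an Atiyah/Kodaira--Spencer-type class: the sheaf $F/J^2F$ on $X \times \Spec(\mathcal{O}_{\mathcal{M}_X}/J^2)$ is a first-order deformation of $i_*E_p$ whose Kodaira--Spencer class is a tautological element of $\ext^1_X(i_*E_p,i_*E_p)\otimes (J/J^2|_p)^\vee$; the map (\ref{eq3}), restricted to $p$, records the infinitesimal failure of $F/J^2F$ to remain scheme-theoretically supported on $Y$, i.e.\ it computes the image of this Kodaira--Spencer class under the edge map to $\Hom_Y(E_p, E_p \otimes N_{Y/X})\otimes(J/J^2|_p)^\vee$. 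Combined with the spectral correspondence in Lemma \ref{lem3.2}, this shows that $\phi_v$ equals the edge-map image of $v$ under the identifications above; hence $\psi_p$ coincides with the edge map and is injective. I expect to execute this naturality matching by picking an explicit Koszul-type locally free resolution of $i_*F$ (or equivalently via an Atiyah-class computation), which furnishes concrete representatives on both sides simultaneously.
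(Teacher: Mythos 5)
Your identification of the source of $\psi_p$ is where the argument breaks. You claim $\Hom_{\mathbb{C}}(J/J^2|_p,\mathbb{C}) \cong T_p\mathcal{M}_X/T_p\mathcal{M}_Y \cong \mathrm{coker}\bigl(\ext^1_Y(E_p,E_p)\to\ext^1_X(i_*E_p,i_*E_p)\bigr)$, but in the generality of the proposition $\mathcal{M}_X$ and $\mathcal{M}_Y$ are not assumed smooth, and for a closed immersion of singular schemes the natural map $T_p\mathcal{M}_X/T_p\mathcal{M}_Y \to \Hom_{\mathbb{C}}(J/J^2|_p,\mathbb{C})$ is injective but in general \emph{not} surjective (e.g.\ a cuspidal cubic in $\mathbb{A}^2$ at the cusp: the fibre of the normal sheaf is $1$-dimensional while the tangent quotient vanishes). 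The dualized transitivity triangle only gives a four-term sequence $T_p\mathcal{M}_Y \to T_p\mathcal{M}_X \to \Hom_{\mathbb{C}}(J/J^2|_p,\mathbb{C}) \to \mathcal{H}^1\bigl((Li_p^*\mathbb{L}_{\mathcal{M}_Y})^{\vee}\bigr)$, and the last connecting map need not vanish. Consequently your argument at best proves injectivity of $\psi_p$ on the image of $T_p\mathcal{M}_X$, i.e.\ on normal vectors lifting to first-order deformations of $i_*E_p$ on $X$; it says nothing about obstructed normal directions. These are exactly the directions that matter: the injectivity of $\psi$ is needed on the whole normal cone $C_{\mathcal{M}_Y/\mathcal{M}_X}\subset \Spec_{\mathcal{M}_X}(\sym(J/J^2))$, whose points over $p$ are arbitrary elements of $\Hom_{\mathbb{C}}(J/J^2|_p,\mathbb{C})$. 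For the same reason your ``tautological Kodaira--Spencer class'' in $\ext^1_X(i_*E_p,i_*E_p)\otimes (J/J^2|_p)^{\vee}$ does not exist in general: a normal vector $v$ gives a square-zero extension of the point $p$ that maps to $\mathcal{M}_X$ only when $v$ lifts to a tangent vector, so the interpretation of $\psi_p(v)$ you propose is only defined on the liftable locus.

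The paper closes precisely this gap with the cotangent-complex/obstruction-theory package: the reduced Atiyah class $\Atp$ is defined on all of $\Hom_{\mathbb{C}}(J/J^2|_p,\mathbb{C})$ via the transitivity triangle, and its injectivity (Proposition \ref{prop5.1}) is a five-lemma argument whose essential extra input, beyond the Kodaira--Spencer isomorphisms you use, is the injectivity of the comparison on obstruction spaces $\mathcal{H}^1((Li_p^*\mathbb{L}_{\mathcal{M}_Y})^{\vee})\to \ext^2_Y(E_p,E_p)$ from \cite[Lemma 4.6]{bf}; your target identification via the spectral sequence $\ext^a_Y(E_p,E_p\otimes \Lambda^b N_{Y/X})\Rightarrow \ext^{a+b}_X(i_*E_p,i_*E_p)$ matches the paper's bottom exact row, so that part is fine. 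Note also that the remaining step you sketch as a Koszul-resolution computation is carried out in the paper only in the linear case $X=\Tot_Y(V)$ (Lemma \ref{lem5.2}); identifying $\Atp$ with $\psi_p$ for general $X$ requires a further, nontrivial reduction using the Atiyah class of $\mathcal{F}$ along the deformation to the normal cone and a continuity argument (Proposition \ref{prop5.2}), which your outline does not address. To repair your proof you would either need a smoothness hypothesis at $p$ (not available here) or an argument handling the non-liftable normal directions, which is exactly the content the paper supplies.
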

We defer the proof of this technical result to Section \ref{sec5}. We will equate $\psi_{p}$ with what we call the reduced Atiyah class $\Atp$, and conclude by showing that $\Atp$ is injective.

\begin{thm}[Theorem \ref{thm1}] \label{thm3.1}
The morphism \emph{$\psi$} in \emph{(\ref{eq1}):} $D_{\mathcal{M}_Y/\mathcal{M}_X} \to \mathcal{M}_{D_{Y/X}/\mathbb{C}}$ is set-theoretically injective.

If moreover  open subsets $U\subset \mathcal{M}_{D_{Y/X}/\mathbb{C}}$ and $\psi^{-1}(U)\subset D_{\mathcal{M}_Y/\mathcal{M}_X}$ are varieties and $U$ is normal, the restriction $\psi:\psi^{-1}(U)\to U$ is an open immersion.
\end{thm}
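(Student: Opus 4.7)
My plan is to reduce set-theoretic injectivity to a fibrewise check over the base $\mathbb{C}$, and then upgrade to an open immersion by invoking Zariski's main theorem under the normality hypothesis on $U$.

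Since $\psi$ is a morphism over $\mathbb{C}$, set-theoretic injectivity reduces to injectivity on each fibre over $t\in\mathbb{C}$. For $t\in\mathbb{C}^*$, $\psi$ is by construction the canonical isomorphism $\mathcal{M}_X\xrightarrow{\sim}\mathcal{M}_{D_{Y/X}/\mathbb{C}}|_t$, so there is nothing to check. Over $t=0$, $\psi$ specialises to the map $C_{\mathcal{M}_Y/\mathcal{M}_X}\to \mathcal{M}_{N_{Y/X}}$, which by Lemma \ref{lem3.2} sends a closed point $(p,v)$ to the stable sheaf on $N_{Y/X}$ corresponding under spectral correspondence to the Higgs pair $(E_p,\phi_v)$. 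If two such points $(p_1,v_1),(p_2,v_2)$ are mapped to the same sheaf, applying $\pi_*$ forces $E_{p_1}\cong E_{p_2}$, hence $p_1=p_2=:p$ by the moduli description; the two Higgs fields then coincide as elements of $\Hom_Y(E_p,E_p\otimes N_{Y/X})$, and Proposition \ref{prop3.1} yields $v_1=v_2$.

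For the second assertion, the stated hypotheses give a finite-type separated morphism $\psi:\psi^{-1}(U)\to U$ between integral schemes, with $U$ normal. It is quasi-finite since we have just shown every fibre to be a single point. It is birational because $\psi^{-1}(U)$ is a non-empty open subset of $D_{\mathcal{M}_Y/\mathcal{M}_X}$, whose locus over $\mathbb{C}^* \subset \mathbb{C}$ is dense, so $\psi^{-1}(U)$ necessarily meets the $\mathbb{C}^*$-part, where $\psi$ restricts to an isomorphism by construction. Zariski's main theorem, in the form that a quasi-finite, separated, birational morphism of finite type to a normal scheme is an open immersion (see e.g.\ \emph{EGA IV, Th.\ 8.12.10}), then delivers the conclusion.

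The main obstacle in this outline is the injectivity of $\psi_p$ asserted in Proposition \ref{prop3.1}, whose proof (deferred to Section \ref{sec5}) proceeds by identifying $\psi_p$ with a reduced Atiyah class $\Atp$ and checking injectivity of the latter; this is where the geometric content of the theorem really lies. Once that input is granted, the argument above simply packages the spectral-correspondence description of the special fibre of $\psi$ together with the classical birational consequence of Zariski's main theorem.
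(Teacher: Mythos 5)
Your proposal is correct and follows essentially the same route as the paper: reduce injectivity to the special fibre over $0\in\mathbb{C}$, identify points $(p,v)$ of the normal cone with Higgs pairs $(E_p,\phi_v)$ via Lemma \ref{lem3.2}, invoke the injectivity of $\psi_p$ from Proposition \ref{prop3.1}, and deduce the open-immersion statement from Zariski's main theorem applied to the quasi-finite birational map onto the normal $U$ (your justification of birationality via density of the locus over $\mathbb{C}^*$ is a welcome extra detail). The only step you, like the paper, leave implicit is that an isomorphism of Higgs pairs supported on the same stable (hence simple) bundle $E_p$ is conjugation by a scalar, so the two Higgs fields literally coincide in $\Hom_{Y}(E_p,E_p\otimes N_{Y/X})$ before Proposition \ref{prop3.1} is applied.
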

\begin{proof}
To prove $\psi$ is injective, it suffices to show that for different closed points $x$ in the special fibre $C_{\mathcal{M}_Y/\mathcal{M}_X}$, the corresponding sheaves $\mathcal{F}_x$ on $N_{Y/X}$ are not isomorphic.

The normal cone $C_{\mathcal{M}_Y/\mathcal{M}_X}$ embeds in the normal sheaf scheme: $\Spec_{\mathcal{M}_X}(\sym(J/J^2))$. By Lemma \ref{lem3.2}, we only need to check that for different points $x=(p,v)\in C_{\mathcal{M}_Y/\mathcal{M}_X},$ the Higgs pairs $(E_p,\phi_v)$ will be different.

We know from Proposition \ref{prop3.1} that $$
\psi_p:\Hom_{\mathbb{C}}(J/J^2|_p,\mathbb{C})\To \Hom_{Y}(E_p,E_p\otimes N_{Y/X}),
$$ sending $v$ to the Higgs field $\phi_v$ in (\ref{eq4}), is injective. This proves that different $x=(p,v)$ correspond to different Higgs pairs and $\psi$ is injective.

The map $\psi$ is an isomorphism on the open subsets over $\mathbb{C}^*\subset\mathbb{C}$. If $U,\psi^{-1}(U)$ are varieties and $U$ is normal, the quasi-finite birational map $\psi:\psi^{-1}(U)\to U$ is an open immersion by Zariski's main theorem.  
\end{proof}

\section{Relations to holomorphic symplectic geometry}\label{sec4}
In this section, we focus on the case of a smooth curve $C$ inside a symplectic surface $S.$ We study further properties of the moduli space $\mathcal{M}_{D_{C/S}/\mathbb{C}}$ parametrizing one-dimensional sheaves numerically equivalent to $i_*E_0$, the push-forward of a rank $r$ degree $d$ stable bundle $E_0$ on $C.$

Our first result says:
\begin{prop}\label{prop4.1}
The smooth relative moduli space $\mathcal{M}_{D_{C/S}/\mathbb{C}}$ is a Poisson variety. Its symplectic leaves are the fibres of $  \mathcal{M}_{D_{C/S}/\mathbb{C}}\to \mathbb{C},$ i.e. the Beauville--Mukai system $\mathcal{M}_{S}$ and the Hitchin system $\mathcal{M}_{\emph{GL(r)}}$.    
\end{prop}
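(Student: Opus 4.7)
The plan is to construct the Poisson structure on the open dense subset $D_{\mathcal{M}_C/\mathcal{M}_S}\subset \mathcal{M}_{D_{C/S}/\mathbb{C}}$ identified in Corollary \ref{coro1}, using the Lagrangian embedding $\mathcal{M}_C\hookrightarrow \mathcal{M}_S$, and then extend it to the whole moduli space.

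I would start by constructing a relative symplectic form on $D_{\mathcal{M}_C/\mathcal{M}_S}/\mathbb{C}$. Since $\mathcal{M}_C$ sits in the holomorphic symplectic $\mathcal{M}_S$ as a Lagrangian submanifold, the pullback of $\omega_{\mathcal{M}_S}$ to $D_{\mathcal{M}_C/\mathcal{M}_S}$, projected to $\Omega^2_{D_{\mathcal{M}_C/\mathcal{M}_S}/\mathbb{C}}$, vanishes to order exactly one along the special fibre $\{t=0\}$. In local Darboux coordinates $(x_i,y_i)$ on $\mathcal{M}_S$ with $\mathcal{M}_C=\{y_i=0\}$ and $\tilde y_i=y_i/t$, this relative 2-form reads $t\sum dx_i\wedge d\tilde y_i$. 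Division by the global function $t$ produces a nowhere-vanishing, relatively closed section $\omega_D$ of $\Omega^2_{D_{\mathcal{M}_C/\mathcal{M}_S}/\mathbb{C}}$. Inverting yields a Poisson bivector $\pi_D$ on $D_{\mathcal{M}_C/\mathcal{M}_S}$, tangent to the fibres, whose symplectic leaves are the fibres themselves: the generic fibre $\mathcal{M}_S$ with the Mukai form rescaled by $1/t$, and the special fibre $T^*\mathcal{M}_C = N_{\mathcal{M}_C/\mathcal{M}_S}$ with its canonical symplectic form.

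Next, I would extend $\pi_D$ across the complement of $D_{\mathcal{M}_C/\mathcal{M}_S}$ in $\mathcal{M}_{D_{C/S}/\mathbb{C}}$. By Corollary \ref{coro1}, this complement is the locus of Higgs bundles with unstable underlying bundle in $\mathcal{M}_{\text{GL(r)}}$, a proper closed subvariety of the divisor $\mathcal{M}_{\text{GL(r)}}\subset \mathcal{M}_{D_{C/S}/\mathbb{C}}$, and hence of codimension at least $2$ in the smooth ambient $\mathcal{M}_{D_{C/S}/\mathbb{C}}$. Sections of the vector bundle $\wedge^2 T_{\mathcal{M}_{D_{C/S}/\mathbb{C}}}$ extend uniquely across such a subvariety, so $\pi_D$ extends to a global bivector $\pi$. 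The Jacobi identity $[\pi,\pi]=0$ and tangency to the fibres of $\mathcal{M}_{D_{C/S}/\mathbb{C}}\to\mathbb{C}$ are closed conditions, so they propagate from the open dense subset to all of $\mathcal{M}_{D_{C/S}/\mathbb{C}}$ by irreducibility of the smooth total space.

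Finally, to identify the leaves on the special fibre: the extended 2-form on $\mathcal{M}_{\text{GL(r)}}$ restricts to the canonical symplectic form on the open dense $T^*\mathcal{M}_C$; by the same codimension $\geq 2$ extension argument, it agrees with the Hitchin symplectic form on $\mathcal{M}_{\text{GL(r)}}$. The main obstacle is precisely this non-degeneracy on the unstable-underlying-bundle locus: an a priori Hartogs-type extension of $\pi$ could drop rank there. I would overcome this by appealing to Hitchin's construction: both the extended $\pi^{-1}$ and the Hitchin form restrict to the canonical form on $T^*\mathcal{M}_C$, so by uniqueness of extension across the codimension $\geq 2$ locus they must coincide, and $\pi$ is non-degenerate on $\mathcal{M}_{\text{GL(r)}}$.
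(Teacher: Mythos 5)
Your proposal is correct, but it takes a genuinely different route from the paper. The paper works purely algebraically with the Rees-algebra model $D_{\mathcal{M}_C/\mathcal{M}_S}=\Spec_{\mathcal{M}_S}(\oplus_n t^nJ^{-n})$: local functions are $\sum_i t^{-i}f_i$ with $f_i\in J^i$, and the Lagrangian condition enters as $\{J,J\}_\omega\subset J$, which makes the rescaled bracket $\{t^{-i}f_i,t^{-j}g_j\}:=t^{-i-j+1}\{f_i,g_j\}_\omega$ well defined; verticality and the induced symplectic structures on the two fibres are then read off from the grading, and the structure is extended over the codimension $\geq 2$ complement exactly as you do. You instead encode the same rescaling as division of the pulled-back $2$-form by $t$, verified in local holomorphic Darboux coordinates adapted to the Lagrangian, then invert and extend the bivector by Hartogs. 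Both arguments hinge on the same two pillars (the Lagrangian condition making the $t$-rescaled structure regular along $t=0$, and extension across the codimension $\geq 2$ unstable locus from Corollary \ref{coro1}), so the difference is mainly one of technology: the paper's bracket construction is coordinate-free and stays in the algebraic category, whereas your version needs the local Darboux--Weinstein normal form and the (routine) remark that divisibility of an algebraic relative $2$-form by $t$ can be checked analytically; on the other hand you are more explicit than the paper on the one delicate point, namely why the extended bivector remains nondegenerate on the locus of Higgs bundles with unstable underlying bundle --- your comparison of the extended bivector with the inverse of the Hitchin symplectic form on $\mathcal{M}_{\text{GL(r)}}$, both agreeing on the dense open $T^*\mathcal{M}_C$, settles this cleanly (phrase it as a comparison of bivectors rather than of $\pi^{-1}$, which is a priori undefined where $\pi$ might degenerate). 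No gap; your fibrewise-symplectic description also immediately gives that the leaves are exactly the fibres.
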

\begin{proof}
The holomorphic symplectic form $\omega$ on $\mathcal{M}_{S}$ induces a Poisson bracket on the structure sheaf:
$$
\{\ \ ,\ \}_{\omega}:\mathcal{O}_{\mathcal{M}_{S}}\otimes_{\mathbb{C}}\mathcal{O}_{\mathcal{M}_{S}}\To \mathcal{O}_{\mathcal{M}_{S}},\ \ f\otimes g \longmapsto \{f,g\}_{\omega}.
$$

$J$ is the ideal sheaf of the Lagrangian submanifold $\mathcal{M}_{C}$, we know by \cite[Proposition 1.5.1]{CG} \begin{equation} \label{eq6}
\{J,J\}_{\omega}\subset J.
\end{equation}

Since $D_{\mathcal{M}_Y/\mathcal{M}_X} =\Spec_{\mathcal{M}_{X}}(\oplus_{n\in \mathbb{Z}} t^n J^{-n}),$ local functions on $D_{\mathcal{M}_Y/\mathcal{M}_X}$ can be uniquely written as $\sum_{i\in \mathbb{Z}} t^{-i}f_i$ with $f_i\subset J^{i}.$ The bracket $$
\{t^{-i}f_i, t^{-j}g_j\}:=t^{-i-j+1}\{f_i,g_j\}_{\omega}
$$ is well-defined because of (\ref{eq6}), and gives a Poisson bracket on $D_{\mathcal{M}_Y/\mathcal{M}_X}.$ It extends to a Poisson structure on $\mathcal{M}_{D_{C/S}/\mathbb{C}}$ since the complement of $D_{\mathcal{M}_Y/\mathcal{M}_X}$ has codimension at least 2.

This Poisson bivector lives in the vertical/fibre direction. The holomorphic symplectic varieties $\mathcal{M}_{S}$ and $\mathcal{M}_{N_{C/S}}\cong \mathcal{M}_{\text{GL(r)}}$ are exactly the symplectic leaves of $\mathcal{M}_{D_{C/S}/\mathbb{C}}$.
\end{proof}
Turning attention to abelian surfaces $A$, by Grothendieck-Riemann-Roch, 
\begin{align*}
ch(i_*E_0)&=i_*\bigl(ch(E_0)\cup\td(T_C)\bigr)\\
&=r[C]+i_*\biggl(c_1(E_0)+\frac{rc_1(T_C)}{2}\biggr).
\end{align*}

The moduli space of stable one-dimensional sheaves $\mathcal{M}_{A}$ admits the Albanese morphism
$$
\text{Alb}:\mathcal{M}_{A}\To A^{\vee} \times A,
$$ sending any stable sheaf $G\in \mathcal{M}_{A}$ to $$\bigl(\det(G)\det(i_*E_0)^{-1},ch_2(G)-ch_2(i_*E_0)\bigr)\in \Pic^{0}(A)\times \text{CH}_0(A)_0\cong A^{\vee}\times A.$$ Here $\text{CH}_0(A)_0$ is the Chow group of 0-cycles on $A$ with degree $0.$ The Albanese morphism is an isotrivial fibration and the fibre $\ku=\text{Alb}^{-1}(0,0)$ is a hyperkähler manifold of generalized Kummer type.

When restricted to $\mathcal{M}_{C}\subset \mathcal{M}_{A},$ the Albanese morphism becomes:
$$
\mathcal{M}_{C}\To A^{\vee}\times A:
\ \
\mathcal{E}\longmapsto \bigl(0,i_*c_1(\mathcal{E})-i_*c_1(E_0)\bigr),
$$
which induces a map $$\tilde{\textbf{c}}_{1}:\mathcal{M}_C\To A, 
\mathcal{E}\longmapsto i_*c_1(\mathcal{E})-i_*c_1(E_0).
$$

\begin{prop} \label{prop4.2}
The subvariety $\ku \subset \mathcal{M}_{A}$ degenerates to a smooth closed subvariety $\mathcal{N}$ in the moduli space $\mathcal{M}_{\emph{GL(r)}}$ of $\emph{GL(r)}$ stable Higgs bundles.      
\end{prop}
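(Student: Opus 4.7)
I aim to realise $\mathcal{N}$ as the special fibre of a flat degeneration of $\ku$ taking place inside the smooth relative moduli space $\mathcal{M}_{D_{C/A}/\mathbb{C}}\to\mathbb{C}$ from Proposition~\ref{prop4.1}, whose generic fibre is $\mathcal{M}_A$ and whose special fibre is $\mathcal{M}_{\text{GL(r)}}$. Let $\mathcal{G}$ denote the universal (twisted) sheaf on $D_{C/A}\times_{\mathbb{C}}\mathcal{M}_{D_{C/A}/\mathbb{C}}$. The main task is to extend the Albanese morphism $\text{Alb}$ to a morphism $\widetilde{\text{Alb}}$ of $\mathbb{C}$-schemes that restricts to $(\mu,\tilde{\mathbf{c}}_1)$ on the special fibre, and then to cut out $\tilde{\mathcal{N}}$ as the preimage of the appropriate zero section.

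I would build $\widetilde{\text{Alb}}$ componentwise. The $A$-component is the easier half: Grothendieck--Riemann--Roch applied to the relative second Chern class of $\mathcal{G}$, together with the affine morphism $D_{C/A}\to A\times\mathbb{C}$, yields a morphism $\mathcal{M}_{D_{C/A}/\mathbb{C}}\to A\times\mathbb{C}$ of $\mathbb{C}$-schemes whose generic fibre recovers the $A$-component of $\text{Alb}$, and whose restriction to $\mathcal{M}_{\text{GL(r)}}$ reduces to $\tilde{\mathbf{c}}_1$ via $\text{ch}_2(i_*\mathcal{E})=i_*c_1(\mathcal{E})+\text{const}$. The $A^{\vee}$-component is more delicate. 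The determinant $\det\mathcal{G}$ is a $\mathbb{C}^*$-equivariant line bundle on the Rees-type family from Section~\ref{sec2}; combined with the deformation-to-normal-cone family $\mathcal{P}\to\mathbb{C}$ of the origin in $A^{\vee}=\Pic^0(A)$ (with special fibre $T_0\Pic^0(A)\cong H^1(A,\mathcal{O}_A)$, the target of $\mu$), one obtains a morphism $\mathcal{M}_{D_{C/A}/\mathbb{C}}\to\mathcal{P}$ recovering $\det(\cdot)\det(i_*E_0)^{-1}$ over $\mathbb{C}^*$ and agreeing at $t=0$ with the explicit moment-map formula~(\ref{eq8}). Setting $\tilde{\mathcal{N}}:=\widetilde{\text{Alb}}^{-1}(0_{\mathcal{P}}\times 0_A)$ gives a closed subvariety of $\mathcal{M}_{D_{C/A}/\mathbb{C}}$ whose generic fibre is $\text{Alb}^{-1}(0,0)=\ku$ and whose special fibre is $\tilde{\mathbf{c}}_1^{-1}(0)\cap\mu^{-1}(0)=\mathcal{N}$; flatness of $\tilde{\mathcal{N}}\to\mathbb{C}$ follows from equidimensionality of fibres, using that the generic Albanese is isotrivial and that its special-fibre counterpart is submersive by the next step.

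Smoothness of $\mathcal{N}$ then reduces to joint transversality of the differentials of $\mu$ and $\tilde{\mathbf{c}}_1$ on $\mathcal{N}$ inside the smooth $\mathcal{M}_{\text{GL(r)}}$. The map $\tilde{\mathbf{c}}_1$ is a submersion because it is equivariant for the free $A$-action by translation of supports. The moment map $\mu$ is a submersion at any point where the $\Pic^0(A)$-action has finite stabiliser, which holds throughout $\mathcal{M}_{\text{GL(r)}}$: the underlying bundle $\mathcal{E}$ is simple, and for $g\geq 2$ the pullback $i^*:\Pic^0(A)\to\Pic^0(C)$ has finite kernel, so only finitely many $i^*L$ can preserve $\mathcal{E}$. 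The two actions are manifestly independent---translation on supports in $A$ versus tensoring by $i^*L$ on the underlying bundle---so the joint differential has full rank~$4$ and $\mathcal{N}$ is smooth. \emph{The main obstacle} is the $A^{\vee}$-component of $\widetilde{\text{Alb}}$: the determinant of a sheaf on the non-compact threefold $N_{C/A}\cong T^*C$ carries no canonical value in $A^{\vee}$, so identifying the special fibre with $\mu$ requires renormalising determinants via the $\mathbb{C}^*$-equivariant Rees structure of Section~\ref{sec2} and matching the resulting map carefully with the explicit formula~(\ref{eq8}).
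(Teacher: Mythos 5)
Your strategy (degenerate the Albanese morphism along the family $\mathcal{M}_{D_{C/A}/\mathbb{C}}\to\mathbb{C}$ and cut out $\mathcal{N}$ as the zero locus of the limiting maps) is the same skeleton as the paper's proof, but the step you explicitly defer as ``the main obstacle'' is precisely the content of that proof, so the proposal does not yet establish the statement. The paper never extends the Albanese morphism over all of $\mathcal{M}_{D_{C/A}/\mathbb{C}}$ via $\det$ or $\mathrm{ch}_2$ of a universal sheaf on the non-compact family (which, as you note, is genuinely problematic on $N_{C/A}\cong T^*C$); it works only on the open subset $D_{\mathcal{M}_C/\mathcal{M}_A}\subset\mathcal{M}_{D_{C/A}/\mathbb{C}}$ of Corollary \ref{coro1}. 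There the two components of $\mathrm{Alb}$ degenerate for free: the $A$-component simply composes with the blow-down and restricts on the special fibre to $\tilde{\mathbf{c}}_1$ on $N_{\mathcal{M}_C/\mathcal{M}_A}\cong T^*\mathcal{M}_C$, while the normalized determinant $\mathcal{M}_A\to A^{\vee}$ sends $\mathcal{M}_C$ to $0$ and hence, by the Rees description of the deformation to the normal cone, degenerates to its derivative $N_{\mathcal{M}_C/\mathcal{M}_A}\to T_{[0]}A^{\vee}\cong H^1(A,\mathcal{O}_A)$. The key computation, which you do not carry out, is that this derivative is the trace map, which on the sequence $0\to \ext^1_C(\mathcal{E},\mathcal{E})\to\ext^1_A(i_*\mathcal{E},i_*\mathcal{E})\to\Hom_C(\mathcal{E},\mathcal{E}\otimes T^*C)\to 0$ kills $\ext^1_C(\mathcal{E},\mathcal{E})$ and descends to exactly $\mu$ in (\ref{eq8}); $\mu$ is then extended from $T^*\mathcal{M}_C$ to all of $\mathcal{M}_{\text{GL(r)}}$ by a Hartogs/codimension-$\geq 2$ argument, and $\tilde{\mathbf{c}}_1$ is defined directly. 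Without this identification your $\widetilde{\mathrm{Alb}}$ has no computed special fibre, so $\mathcal{N}$ is not pinned down.

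The smoothness argument also does not stand as written. There is no ``free $A$-action by translation of supports'' on $\mathcal{M}_{\text{GL(r)}}$: its points are sheaves on $T^*C$ (Higgs bundles on the fixed curve $C$), and translating supports inside $A$ moves $C$ off itself, so the claimed equivariance of $\tilde{\mathbf{c}}_1$ and the ``independence'' of the two actions are vacuous. Moreover, deducing that $\mu$ is submersive from finiteness of $\Pic^0(A)$-stabilisers presupposes that $\mu$ is a moment map for that action; this is Proposition \ref{prop4.4}, proved afterwards, and within your proposal it is doubly unavailable since you have not yet matched your limiting map with (\ref{eq8}). The mechanism that does work, and is what underlies the paper's assertion that $(\tilde{\mathbf{c}}_1,\mu)$ is an isotrivial fibration (hence has smooth fibres inside the smooth $\mathcal{M}_{\text{GL(r)}}$), is: tensoring by $M\in\Pic^0(C)$ translates $\tilde{\mathbf{c}}_1$ by $r\,i_*c_1(M)$ through the surjection $\Jac(C)\to A$ and leaves $\mu$ unchanged (it depends only on $\mathrm{tr}\,\phi$), while $\phi\mapsto\phi+\eta\cdot\mathrm{id}$ for $\eta\in H^0(\Omega_C)$ fixes $\mathcal{E}$ and translates $\mu$. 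Finally, a minor point: equidimensionality of fibres does not by itself give flatness of $\tilde{\mathcal{N}}\to\mathbb{C}$ (a component could still sit over $0$), though the paper's statement does not actually require flatness.
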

\begin{proof}
We first study the open subset $D_{\mathcal{M}_C/\mathcal{M}_A}\subset \mathcal{M}_{D_{C/A}/\mathbb{C}}$.
Consider the second component: $\mathcal{M}_{A}\to A$ of the Albanese morphism. The composition
$
D_{\mathcal{M}_C/\mathcal{M}_A}\to \mathcal{M}_{A}\to A
$ restricts to $$
N_{\mathcal{M}_C/\mathcal{M}_A}\To \mathcal{M}_C\xrightarrow[]{\tilde{\textbf{c}}_{1}}A
$$ on the special fibre $N_{\mathcal{M}_C/\mathcal{M}_A}\cong T^*\mathcal{M}_C\subset \mathcal{M}_{\text{GL(r)}}$.

On the other hand, the (normalized) determinant morphism $
\mathcal{M}_{A}\to A^{\vee}
$
sends $\mathcal{M}_{C}$ to $0\in A^{\vee},$ and degenerates to its derivative on the special fibre:
\begin{equation} \label{eq7}
N_{\mathcal{M}_C/\mathcal{M}_A}\To T_{[0]}(A^{\vee})\cong H^1(A,\mathcal{O}_A).
\end{equation}

Consider the standard short exact sequence
$$
0\to \ext^1_{C}(\mathcal{E},\mathcal{E})\To \ext_A^1(i_*\mathcal{E},i_*\mathcal{E}) \To \Hom_{C}(\mathcal{E},\mathcal{E}\otimes T^*C)\To 0.
$$
The trace morphism $$
\ext_A^1(i_*\mathcal{E},i_*\mathcal{E})\To H^1(A,\mathcal{O}_A)
$$ sends the subspace $\ext^1_{C}(\mathcal{E},\mathcal{E})$ to $0$ and induces 
\begin{equation} \label{eq8}
\mu:\Hom_{C}(\mathcal{E},\mathcal{E}\otimes T^*C)=\ext_A^1(i_*\mathcal{E},i_*\mathcal{E})/\ext^1_{C}(\mathcal{E},\mathcal{E})\To H^1(A,\mathcal{O}_A).
\end{equation}
This is exactly the derivative in (\ref{eq7}) at $\mathcal{E}\in \mathcal{M}_C,$ since the trace morphism is the derivative of the determinant morphism. Here we identify the space $\Hom_{C}(\mathcal{E},\mathcal{E}\otimes T^*C)$ with the space of Higgs fields.

The map $\mu$ extends uniquely to a map: $\mathcal{M}_{\text{GL(r)}}\to H^1(A,\mathcal{O}_A)$, since the complement of $T^*\mathcal{M}_C\subset \mathcal{M}_{\text{GL(r)}}$ has codimension at least $2$.
On the moduli of stable Higgs bundles, $(\tilde{\textbf{c}}_{1}, \mu): \mathcal{M}_{\text{GL(r)}}\to A\times H^1(A,\mathcal{O}_A)$ is an isotrivial fibration.

Combining the above, we see $\ku$ degenerates to the smooth subvariety $\mathcal{N}:=(\tilde{\textbf{c}}_1, \mu)^{-1}(0,0)\subset \mathcal{M}_{\text{GL(r)}}.$ 
\end{proof}
We now give a more explicit description of $\mathcal{N}.$ The short exact sequence $$
0\to \ext^1_{C}(\mathcal{E},\mathcal{E})\To \ext_A^1(i_*\mathcal{E},i_*\mathcal{E}) \To \Hom_{C}(\mathcal{E},\mathcal{E}\otimes T^*C)\To 0
$$ is self dual with respect to the Serre pairings on $C$ and $A$. The (Serre) dual of the trace map is the identity map $$
H^1(A,\mathcal{O}_A)\To \ext_A^1(i_*\mathcal{E},i_*\mathcal{E}),
$$ and the dual of $
\mu:\Hom_{C}(\mathcal{E},\mathcal{E}\otimes T^*C)\to H^1(A,\mathcal{O}_A)
$ in (\ref{eq8}) is 
\begin{equation} \label{eq9}
id:H^1(A,\mathcal{O}_A)\To \ext^1_{C}(\mathcal{E},\mathcal{E})\subset \ext_A^1(i_*\mathcal{E},i_*\mathcal{E}).
\end{equation}
Therefore, the kernel of $\mu$ is exactly the Higgs fields $\phi\in \Hom_{C}(\mathcal{E},\mathcal{E}\otimes T^*C)$ satisfying $$
\phi \cup id(H^1(A,\mathcal{O}_A))=0\in \ext^1_{C}(\mathcal{E},\mathcal{E}\otimes T^*_C)
$$ and we see:
\begin{prop}
The closed subvariety $\mathcal{N}$ is the locus of Higgs bundles $(\mathcal{E},\phi)$ such that \begin{equation} \label{eq10}
i_*c_1(\mathcal{E})=i_*c_1(E_0)\in A,\text{  and } \phi \cup id(H^1(A,\mathcal{O}_A))=0.
\end{equation}
\end{prop}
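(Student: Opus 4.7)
The plan is to combine Proposition \ref{prop4.2}, which identifies $\mathcal{N} = (\tilde{\textbf{c}}_1, \mu)^{-1}(0, 0) \subset \mathcal{M}_{\text{GL(r)}}$, with the Serre-duality analysis of $\mu$ developed immediately above the statement. This reduces the task to rewriting each of the vanishing conditions $\tilde{\textbf{c}}_1 = 0$ and $\mu = 0$ in the explicit form (\ref{eq10}). The first is immediate from the definition $\tilde{\textbf{c}}_1(\mathcal{E}) = i_*c_1(\mathcal{E}) - i_*c_1(E_0)$.

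For $\mu = 0$, I would first restrict to the open subset $T^*\mathcal{M}_C \subset \mathcal{M}_{\text{GL(r)}}$, on which the underlying bundle $\mathcal{E}$ is itself stable. Serre duality on $C$ (with $K_C = T^*C$) and on $A$ (with $K_A = \mathcal{O}_A$) makes the short exact sequence of Ext groups self-dual; under these pairings the map $\mu$ of (\ref{eq8}) is Serre-dual to the map of (\ref{eq9}), namely the inclusion $id: H^1(A, \mathcal{O}_A) \hookrightarrow \ext^1_C(\mathcal{E}, \mathcal{E})$ coming from cup product with $id_{i_*\mathcal{E}}$. Hence $\phi \in \ker(\mu)$ if and only if $\mathrm{tr}\bigl(\phi \cup id(\alpha)\bigr) = 0$ in $H^1(C, T^*C)$ for every $\alpha \in H^1(A, \mathcal{O}_A)$. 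To upgrade this trace-vanishing to the stronger class-vanishing in $\ext^1_C(\mathcal{E}, \mathcal{E} \otimes T^*C)$ required by (\ref{eq10}), I would exploit the splitting $\mathcal{E}nd(\mathcal{E}) = \mathcal{O}_C \oplus \mathcal{E}nd_0(\mathcal{E})$ together with stability: one has $H^0(C, \mathcal{E}nd_0(\mathcal{E})) = 0$, so by Serre duality $H^1(C, \mathcal{E}nd_0(\mathcal{E}) \otimes T^*C) = 0$, and therefore the trace map $\ext^1_C(\mathcal{E}, \mathcal{E} \otimes T^*C) \to H^1(C, T^*C) \cong \mathbb{C}$ is an isomorphism. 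Finally I would extend the resulting description from $T^*\mathcal{M}_C$ to all of $\mathcal{M}_{\text{GL(r)}}$ using that the complement has codimension at least $2$ and that both $\mathcal{N}$ and the locus cut out by (\ref{eq10}) are closed.

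The step I expect to be the main obstacle is a careful verification of (\ref{eq9}): that the Serre dual of the trace map $\ext^1_A(i_*\mathcal{E}, i_*\mathcal{E}) \to H^1(A, \mathcal{O}_A)$ is precisely cup product with $id_{i_*\mathcal{E}}$, and that its image lands inside the subspace $\ext^1_C(\mathcal{E}, \mathcal{E})$. The latter reflects the fact that twisting a sheaf supported on $C$ by a line bundle on $A$ only sees its restriction to $C$, so first-order deformations of $i_*\mathcal{E}$ along $\Pic^0(A)$ genuinely come from deformations of $\mathcal{E}$ on $C$; I would confirm both assertions by unwinding the definitions of the trace and the Serre-duality pairings and using the projection formula together with the $i^* \dashv i_*$ adjunction.
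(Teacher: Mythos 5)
Your core argument is the paper's: quote Proposition \ref{prop4.2} to write $\mathcal{N}=(\tilde{\textbf{c}}_1,\mu)^{-1}(0,0)$, note that $\tilde{\textbf{c}}_1=0$ is literally the first condition of (\ref{eq10}), and use the self-duality of $0\to \ext^1_{C}(\mathcal{E},\mathcal{E})\to \ext_A^1(i_*\mathcal{E},i_*\mathcal{E}) \to \Hom_{C}(\mathcal{E},\mathcal{E}\otimes T^*C)\to 0$ under the Serre pairings on $C$ and $A$ to identify the dual of $\mu$ with the map (\ref{eq9}); the paper's ``proof'' is exactly this discussion, with (\ref{eq9}) asserted rather than verified. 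Your additional step on $T^*\mathcal{M}_C$ --- promoting the pairing condition $\mathrm{tr}\bigl(\phi\cup id(\alpha)\bigr)=0$ to the class condition $\phi\cup id(\alpha)=0$ via simplicity of a stable $\mathcal{E}$, i.e. $H^1(C,\mathcal{E}nd_0(\mathcal{E})\otimes T^*C)=0$ --- is a point the paper passes over silently, and is correct where $\mathcal{E}$ is stable.

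The genuine gap is your last step. Knowing that $\mathcal{N}$ and the locus $Z$ cut out by (\ref{eq10}) are closed and agree on $T^*\mathcal{M}_C$, whose complement has codimension $\geq 2$, does not give $\mathcal{N}=Z$: two closed subvarieties agreeing on such an open set can each contain extra points, or whole components, inside the complement, so you only get equality of the closures of $\mathcal{N}\cap T^*\mathcal{M}_C$ and $Z\cap T^*\mathcal{M}_C$. Since $\mu$ is only \emph{defined} off $T^*\mathcal{M}_C$ as the unique extension, what is needed is a global identification of $\mu$, not a comparison of closed sets: for instance, $(\mathcal{E},\phi)\mapsto \mathrm{tr}(\phi)\cup i^*(-)$ is an algebraic morphism on all of $\mathcal{M}_{\text{GL(r)}}$ (a linear function of the trace component of the Hitchin map) which agrees with (\ref{eq8}) on the dense open $T^*\mathcal{M}_C$ by your duality computation, hence equals $\mu$ everywhere by uniqueness of extensions to a separated target; the paper achieves the same thing through the moment-map description in Proposition \ref{prop4.4}. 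Note also that on the locus of unstable underlying bundles your simplicity argument is unavailable, so there the condition must be read as the Serre-pairing (trace) condition rather than the literal class vanishing --- which is how the paper itself uses (\ref{eq10}), e.g. in cutting out the slice $\mathrm{tr}(\phi)\cup i^*H^1(A,\mathcal{O}_A)=0$ of the Hitchin base in the proof of Proposition \ref{prop4.4}.
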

\begin{rmk}
When $C\xhookrightarrow{i}A$ is the Abel-Jacobi map of a genus $2$ curve, $A\cong\Pic^0(A)\cong \Pic^0(C)$. It is clear from \emph{(\ref{eq10})} that $\mathcal{N}$ is the moduli of \emph{SL(r)} Higgs bundles on $C.$
\end{rmk}
We now relate $\mathcal{N}$ to a holomorphic symplectic reduction $\mathcal{N}^{\vee}$ of $\mathcal{M}_{\text{GL(r)}}$. For any line bundle $L\in\Pic^{0}(A)$, the map that sends a Higgs bundle $(\mathcal{E},\phi)$ to $(\mathcal{E}\otimes i^*L,\phi)$ defines a $\Pic^{0}(A)$ action on $\mathcal{M}_{\text{GL(r)}},$ where $i^*$ is the pull-back map: $\Pic^0(A)\to \Pic^0(C)$.   

\begin{prop} \label{prop4.4}
The $\Pic^{0}(A)$ action is Hamiltonian and the corresponding holomorphic symplectic reduction $\mathcal{N}^{\vee}$ is a dual Lagrangian fibration of the symplectic subvariety $\mathcal{N}$ in $\mathcal{M}_{\emph{GL(r)}}$. Moreover, there exists a finite map $\mathcal{N}\to \mathcal{N}^{\vee}.$  
\end{prop}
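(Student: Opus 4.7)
The plan is to identify $\mu$ from (\ref{eq8}) as the moment map of the $\Pic^0(A)$-action, verify the dual Lagrangian fibration structure fiberwise via the Hitchin system, and then produce the finite map by analyzing how $\Pic^0(A)$ translates the level sets of $\tilde{\textbf{c}}_1$.

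First I would check that the action is Hamiltonian with moment map $\mu$. The Lie algebra $\operatorname{Lie}\Pic^0(A)=H^1(A,\mathcal{O}_A)$ acts infinitesimally on $(\mathcal{E},\phi)\in\mathcal{M}_{\text{GL(r)}}$ by $\xi\mapsto i^*\xi\otimes\operatorname{id}_{\mathcal{E}}\in H^1(C,\mathcal{O}_C)\subset \ext^1_C(\mathcal{E},\mathcal{E})$, which is exactly the map (\ref{eq9}). Since the symplectic form on $\mathcal{M}_{\text{GL(r)}}$ comes from Serre duality on the Higgs deformation complex, and (\ref{eq9}) is by construction the Serre dual of $\mu$ in (\ref{eq8}), the moment-map identity $\iota_{X_\xi}\omega=d\langle\mu,\xi\rangle$ drops out upon unwinding definitions. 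This identifies $\mathcal{N}^{\vee}=\mu^{-1}(0)/\Pic^0(A)$ as a holomorphic symplectic reduction.

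Next I would establish the dual Lagrangian fibration structure using the Hitchin map $h:\mathcal{M}_{\text{GL(r)}}\to B$. Since tensoring by $i^*L$ preserves the characteristic polynomial of $\phi$, $h$ is $\Pic^0(A)$-invariant, and both $\mathcal{N}$ and $\mathcal{N}^{\vee}$ inherit fibrations over the locus in $B$ where $\operatorname{tr}\phi$ lies in $\ker\bigl((i^*)^{\vee}:H^0(C,T^*C)\to H^1(A,\mathcal{O}_A)\bigr)$; a dimension count shows these are Lagrangian. For generic $b\in B$ with smooth spectral curve $\pi:\tilde C\to C$, the fiber $h^{-1}(b)$ is a torsor over $\Pic^0(\tilde C)$, and $\Pic^0(A)$ acts by translation through the pullback $\sigma^*:\Pic^0(A)\to\Pic^0(\tilde C)$ along $\sigma:\tilde C\to A$. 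Under the principal polarization of $\Pic^0(\tilde C)$, the generic fiber of $\mathcal{N}^{\vee}$ is $\Pic^0(\tilde C)/\sigma^*\Pic^0(A)$ and the generic fiber of $\mathcal{N}$ is the identity component of the annihilator $(\sigma^*\Pic^0(A))^{\perp}$; these are dual abelian varieties. The duality extends globally since both spaces are normal and the bad locus has codimension at least $2$.

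Finally I would construct the finite map. Since $\mu$ vanishes on $\mathcal{N}$, the inclusion $\mathcal{N}\hookrightarrow\mu^{-1}(0)$ composes with the symplectic quotient to give $\mathcal{N}\to\mathcal{N}^{\vee}$. A direct computation yields
$$\tilde{\textbf{c}}_1(\mathcal{E}\otimes i^*L)=\tilde{\textbf{c}}_1(\mathcal{E})+r\cdot i_*c_1(i^*L)\in A,$$
so the stabilizer of $\tilde{\textbf{c}}_1^{-1}(0)$ in $\Pic^0(A)$ is the kernel of $L\mapsto r\cdot i_*c_1(i^*L)$, which is $r$ times the composition $\Pic^0(A)\xrightarrow{i^*}\Pic^0(C)\xrightarrow{i_*}A$. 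This composition is the polarization isogeny associated with the ample class $[C]\in\operatorname{NS}(A)$ and hence has finite kernel, so $\mathcal{N}\to\mathcal{N}^{\vee}$ is quasi-finite; being a composition of a closed immersion and a proper quotient makes it finite. The main obstacle is the second step: matching the annihilator $(\sigma^*\Pic^0(A))^{\perp}$ with $\mu^{-1}(0)\cap\tilde{\textbf{c}}_1^{-1}(0)$ uniformly in $b$ over all of $B$ (including singular spectral curves) requires careful bookkeeping of connected components.
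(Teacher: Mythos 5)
Your proposal follows essentially the same route as the paper: identify $\mu$ in (\ref{eq8}) as the moment map by observing that its Serre dual (\ref{eq9}) is exactly the infinitesimal $\Pic^{0}(A)$-action, fibre both $\mathcal{N}$ and $\mathcal{N}^{\vee}$ over the restricted Hitchin base and dualize over smooth spectral curves (kernel of the Albanese map versus cokernel of the pullback, which is equivalent to your annihilator description), and obtain $\mathcal{N}\to\mathcal{N}^{\vee}$ from the slice property of $\tilde{\textbf{c}}_{1}^{-1}(0)$ inside $\mu^{-1}(0)$. Your finiteness argument, via the isogeny $L\mapsto r\cdot i_*c_1(i^*L)$ being a multiple of the polarization attached to $[C]$, is in fact more explicit than the paper's, which simply asserts that the slice maps finitely onto the symplectic quotient.
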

\begin{proof}
On the open subset $T^*\mathcal{M}_C\subset\mathcal{M}_{\text{GL(r)}}$, the holomorphic symplectic form is the usual exterior derivative of the tautological 1-form. The $\Pic^{0}(A)$ action on $T^*\mathcal{M}_C\subset\mathcal{M}_{\text{GL(r)}}$ is induced by the $\Pic^{0}(A)$ action on $\mathcal{M}_C$, which sends $\mathcal{E}$ to $\mathcal{E}\otimes i^*L$.

For Lie algebra elements $\xi \in H^1(\mathcal{O}_A)=T_{[0]}\Pic^0(A)$, denote the corresponding infinitesimal action (equivalently, the induced fundamental vector fields) on $\mathcal{M}_C$ by $X_{\xi}$. At a point $(\mathcal{E},v)\in T^*\mathcal{M}_C$ where $v$ is a cotangent vector, the following standard pairing  defines a moment map 
\begin{equation}\label{eq11}
\mu_0=<v,X_\xi>.
\end{equation}

We claim that the moment map $\mu_0$ on $T^*\mathcal{M}_C$ is the morphism $\mu$ in (\ref{eq8}), where the source of $\mu$ is interpreted as the space of Higgs fields. Indeed, the (Serre) dual of $\mu$ is the identity map in (\ref{eq9}):
$$
H^1(A,\mathcal{O}_A)\xrightarrow[]{id} \ext^1_{C}(\mathcal{E},\mathcal{E}),
$$ which is exactly the $\Pic^{0}(A)$ infinitesimal action on $\mathcal{M}_C$. We then know $\mu$ is also the moment map on $\mathcal{M}_{\text{GL(r)}}$ by continuity.

The holomorphic symplectic reduction $\mathcal{N}^{\vee}$ is the orbifold $\mu^{-1}(0)/\Pic^0(A).$ The closed subvariety $\mathcal{N}=(\tilde{\textbf{c}}_1, \mu)^{-1}(0,0)$ is a slice of the $\Pic^0(A)$ action on $\mu^{-1}(0)$ and the map $\mathcal{N}\to \mu^{-1}(0)/\Pic^0(A)=\mathcal{N}^{\vee}$ is finite. This shows  $\mathcal{N}$ is a symplectic subvariety of $\mathcal{M}_{\text{GL(r)}}$.

It remains to prove $\mathcal{N}$ and $\mathcal{N}^{\vee}$ are dual Lagrangian fibrations. Since the symplectic structures on $\mathcal{N}$ and $\mathcal{N}^{\vee}$ are induced by the symplectic structure on $\mathcal{M}_{\text{GL(r)}}$, both spaces admit Lagrangian fibrations to the linear subspace of the Hitchin base $\oplus_{i=1}^{r}H^0(\Omega_C^i)$ defined by the equation $$
\text{tr}(\phi)\cup i^*H^1(A,\mathcal{O}_A)=0\in H^1(C,\Omega_C), 
$$ where $i^*$ is the restriction map: $H^1(A,\mathcal{O}_A)\to H^1(C,\mathcal{O}_C)$. The generic element of this subspace represents a smooth spectral curve $C_0$ in $T^*C$.

The spectral curve $C_0$ maps to $A$ via the composition $$
f: C_0\subset T^*C\To C \xhookrightarrow[]{i} A.
$$ The fibre of $\mathcal{N}$ over $[C_0]$ is exactly the kernel of the corresponding Albanese morphism $$
\Pic^0(C_0)\To A,
$$ and the fibre of $\mathcal{N}^{\vee}$ over $[C_0]$ is the cokernel of the pull-back map $$
\Pic^0(A)=A^{\vee}\xrightarrow[]{f^*} \Pic^0(C_0).
$$ It then follows that $\mathcal{N}$ and $\mathcal{N}^{\vee}$ are dual fibrations since the two morphisms above are dual to each other.
\end{proof}

\begin{rmk}
In \emph{\cite[Appendix]{res}}, the authors ask whether there exist compact symplectic varieties that degenerate to the moduli of \emph{SL(r)} Higgs bundles on curves with genus $g>2$, generalizing the genus $2$ example. We have provided answers for the opposite direction here.
\end{rmk}

\section{Reduced Atiyah classes} \label{sec5}
We prove Proposition \ref{prop3.1} in this section. We start with the following standard lemma:
\begin{lemma} \label{lem5.1}
Let $\mathcal{G}$ be a complex of quasi-coherent sheaves on a quasi-projective scheme $M$ and $\mathcal{H}^i(\mathcal{G})=0$ for $i>0.$  For a quasi-coherent sheaf $H$ on $M,$ there exists a natural isomorphism 
\begin{equation}\label{eq12}
\mathcal{H}^0(R\mathcal{H}om(\mathcal{G},H))\cong \mathcal{H}om(\mathcal{H}^0(\mathcal{G}),H).
\end{equation}

Similarly, suppose $f:N\to M$ is a proper morphism of quasi-projective schemes, then there exist natural isomorphisms
\begin{equation} \label{eq13}
\mathcal{H}^0(Lf^*\mathcal{G})\cong f^*\mathcal{H}^0(\mathcal{G}), \text{ and } \mathcal{H}^0(Rf_*\mathcal{K})\cong f_*\mathcal{H}^0(\mathcal{K})
\end{equation}
for any complex $\mathcal{K}$ of quasi-coherent sheaves on $N$ such that $\mathcal{H}^{i}(\mathcal{K})=0$ for $i<0.$
\end{lemma}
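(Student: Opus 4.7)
The plan is to handle each isomorphism by reducing the derived functor to its classical counterpart on an appropriate resolution, and then invoking left- or right-exactness of the underlying classical functor applied to a single exact sequence that computes $\mathcal{H}^0$ of the relevant complex. All three assertions are instances of the same principle: when a complex is bounded on the side opposite to where a classical functor is exact, the zeroth cohomology sheaf commutes with that functor.

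For the first isomorphism, I would first replace $\mathcal{G}$ by its truncation $\tau^{\leq 0}\mathcal{G}$, which is quasi-isomorphic to $\mathcal{G}$ because $\mathcal{H}^i(\mathcal{G})=0$ for $i>0$. Since $M$ is quasi-projective, I can choose a resolution $P^{\bullet}\to \tau^{\leq 0}\mathcal{G}$ by locally free (in particular flat) quasi-coherent sheaves concentrated in non-positive degrees. By definition, $R\mathcal{H}om(\mathcal{G},H)$ is represented by $\mathcal{H}om(P^{\bullet},H)$, and its $\mathcal{H}^0$ is $\ker\bigl(\mathcal{H}om(P^{0},H)\to \mathcal{H}om(P^{-1},H)\bigr)$. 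Applying the left exact functor $\mathcal{H}om(-,H)$ to the right exact sequence $P^{-1}\to P^{0}\to \mathcal{H}^0(\mathcal{G})\to 0$ identifies this kernel with $\mathcal{H}om(\mathcal{H}^0(\mathcal{G}),H)$, which gives (\ref{eq12}). Naturality is immediate from the naturality of the truncation and of the adjunctions involved.

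For the pullback isomorphism in (\ref{eq13}), I would reuse the same flat resolution $P^{\bullet}\to \mathcal{G}$ in non-positive degrees, so that $Lf^{*}\mathcal{G}$ is computed by $f^{*}P^{\bullet}$. Then $\mathcal{H}^{0}(Lf^{*}\mathcal{G})=\mathrm{coker}(f^{*}P^{-1}\to f^{*}P^{0})$, and because $f^{*}$ is right exact, this coincides with $f^{*}\mathrm{coker}(P^{-1}\to P^{0})=f^{*}\mathcal{H}^{0}(\mathcal{G})$. For the pushforward isomorphism, I would dually replace $\mathcal{K}$ by $\tau^{\geq 0}\mathcal{K}$, which is quasi-isomorphic to $\mathcal{K}$ since $\mathcal{H}^{i}(\mathcal{K})=0$ for $i<0$, and choose a resolution $\mathcal{K}\to I^{\bullet}$ by $f_{*}$-acyclic (e.g.\ injective) quasi-coherent sheaves in non-negative degrees. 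Then $Rf_{*}\mathcal{K}$ is represented by $f_{*}I^{\bullet}$, so $\mathcal{H}^{0}(Rf_{*}\mathcal{K})=\ker(f_{*}I^{0}\to f_{*}I^{1})$. The left exactness of $f_{*}$ applied to the left exact sequence $0\to \mathcal{H}^{0}(\mathcal{K})\to I^{0}\to I^{1}$ identifies this kernel with $f_{*}\mathcal{H}^{0}(\mathcal{K})$, as desired.

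There is no real obstacle: the only point that requires a little care is ensuring the existence of resolutions with the correct boundedness properties in the category of quasi-coherent sheaves on a quasi-projective scheme, but this is standard. Properness of $f$ is not actually used for the formal statement about $\mathcal{H}^{0}$ commuting with $f_{*}$; it is relevant only insofar as it guarantees that $Rf_{*}$ preserves quasi-coherence and boundedness, so that the statement makes sense in the category under consideration. I would make all three arguments compatible by phrasing them uniformly in terms of the truncation functors $\tau^{\leq 0}$ and $\tau^{\geq 0}$ and the distinguished triangles they sit in, which also makes naturality manifest.
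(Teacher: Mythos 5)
Your proposal is correct in substance, but it takes a genuinely different route from the paper. The paper proves (\ref{eq12}) by citing the hypercohomology spectral sequence $E_2^{p,q}=\mathcal{E}xt^p(\mathcal{H}^{-q}(\mathcal{G}),H)\Rightarrow\mathcal{E}xt^{p+q}(\mathcal{G},H)$, noting that $E_2^{p,q}=0$ for $p<0$ or $q<0$ so the $(0,0)$ entry survives to $E_\infty$, and declares (\ref{eq13}) analogous. You instead use explicit resolutions (bounded-above flat/locally free for $R\mathcal{H}om$ and $Lf^*$, bounded-below injective for $Rf_*$) plus left/right exactness of the underlying classical functors, organized by $\tau^{\leq 0}$ and $\tau^{\geq 0}$. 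This is more elementary, makes the natural maps explicit (they are induced by $\mathcal{G}\to\mathcal{H}^0(\mathcal{G})$, resp.\ $\mathcal{H}^0(\mathcal{K})\to\mathcal{K}$), and your observation that properness of $f$ is not needed for (\ref{eq13}) is accurate; the $Lf^*$ and $Rf_*$ arguments are complete as written.

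One step in the $R\mathcal{H}om$ part is stated too casually: it is not ``by definition'' that $\mathcal{H}om^{\bullet}(P^{\bullet},H)$ represents $R\mathcal{H}om(\mathcal{G},H)$. Since $\mathcal{G}$ is only assumed quasi-coherent, the terms $P^{i}$ are in general infinite direct sums of line bundles, and such sheaves are not $\mathcal{H}om(-,H)$-acyclic for an arbitrary $\mathcal{O}_M$-module $H$ (sheaf products are not exact). The claim is nevertheless true in your setting because $H$ is quasi-coherent: for affine $U$ one has $\operatorname{Ext}^k_U\bigl(\bigoplus_j L_j,H\bigr)=\prod_j H^k(U,L_j^{-1}\otimes H)=0$ for $k>0$, so each $P^{i}$ is acyclic and the standard double-complex comparison with an injective resolution of $H$ identifies $\mathcal{H}om^{\bullet}(P^{\bullet},H)$ with $R\mathcal{H}om(\mathcal{G},H)$ — but this deserves a line of justification. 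Alternatively, you can avoid resolving the first argument altogether: apply $R\mathcal{H}om(-,H)$ to the triangle $\tau^{\leq -1}\mathcal{G}\to\mathcal{G}\to\mathcal{H}^0(\mathcal{G})$ and use that $R\mathcal{H}om(\tau^{\leq -1}\mathcal{G},H)$ has cohomology only in degrees $\geq 1$; this variant is closest in spirit to the paper's spectral-sequence argument while keeping your uniform truncation-based phrasing.
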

\begin{proof}
We prove the isomorphism in (\ref{eq12}) and the isomorphisms in (\ref{eq13}) are analogous. Consider the spectral sequence in \cite[(3.8)]{fm}: 
\begin{equation} \label{exts}
E_2^{p,q}=\mathcal{E}xt^p(\mathcal{H}^{-q}(\mathcal{G}),H)\Rightarrow\mathcal{E}xt^{p+q}(\mathcal{G},H).
\end{equation}
When $p<0,$ $E_2^{p,q}=\mathcal{E}xt^p(\mathcal{H}^{-q}(\mathcal{G}),H)=0.$ Since $\mathcal{H}^i(\mathcal{G})=0$ if $i>0$, $E_2^{p,q}=0$  when $q<0.$ The differentials of the spectral sequence
$$
d_r^{p,q}:E_r^{p,q}\To E_r^{p+r,q-r+1}
$$ also vanish if either $(p,q)$ or $(p+r,q-r+1)$ equals (0,0), which gives 
\[
\mathcal{H}om(\mathcal{G},H))=E_{\infty}^{0,0}\cong \mathcal{H}om(\mathcal{H}^0(\mathcal{G}),H). 
\qedhere
\]
\end{proof}

We now recall the concept of the Atiyah class as explained in \cite[Section 1.4]{At}. The Atiyah classes of the universal sheaves $F$ and $E$, relative to the bases $X$ and $Y$ respectively, are morphisms
\begin{equation*}
\At_F:   R\pi_{\mathcal{M}_X*}(F^{\vee}\otimes^{\textbf{L}}F)^{\vee} [-1]\To \mathbb{L}_{\mathcal{M}_{X}},\text{   and } \At_E: R\pi_{\mathcal{M}_Y*}(E^{\vee}\otimes^{\textbf{L}}E)^{\vee} [-1]\To \mathbb{L}_{\mathcal{M}_{Y}},
\end{equation*} where $\mathbb{L}$ denotes the cotangent complexes.

Consider the exact triangle associated with the adjunction $Li^*i_*E\xrightarrow{\iota}{\text{}} E$:
\begin{equation} \label{eq15}
Li^*i_*E\xrightarrow{\iota}{\text{}} E \to \text{Cone}(\iota).
\end{equation}
By the functoriality of the Atiyah class, we have the commutative diagram 
\begin{equation} \label{eq16}
\begin{tikzcd}
R\pi_{\mathcal{M}_Y*}((Li^*i_*E)^{\vee}\otimes^{\textbf{L}}E)^{\vee}[-1] \arrow{r}{\iota_*} \arrow{d}{Lj^*\At_F} & R\pi_{\mathcal{M}_Y*}(E^{\vee}\otimes^{\textbf{L}}E)^{\vee}[-1] \arrow{r}{}  \arrow{d}{\At_E}\textbf{}& \text{Cone}(\iota_*)\\ Lj^*\mathbb{L}_{\mathcal{M}_X} \arrow{r}{} &  \mathbb{L}_{\mathcal{M}_{Y}} \arrow{r}{} &\mathbb{L}_{\mathcal{M}_{Y}/\mathcal{M}_{X}},  
\end{tikzcd}
\end{equation}
where the first row is obtained by applying $ R\pi_{\mathcal{M}_Y*}((\ \ \ )^{\vee}  \otimes^{\textbf{L}}E)^{\vee}[-1] $ to the exact triangle (\ref{eq15}). Here we have used the natural isomorphisms: $$
R\pi_{\mathcal{M}_Y*}((Li^*i_*E)^{\vee}\otimes^{\textbf{L}}E)^{\vee}\cong R\pi_{\mathcal{M}_Y*}((i_*E)^{\vee}\otimes^{\textbf{L}}i_*E)^{\vee}\cong R\pi_{\mathcal{M}_Y*}((j^*F)^{\vee}\otimes^{\textbf{L}}j^*F)^{\vee},
$$
and the base change theorem \cite[\href{https://stacks.math.columbia.edu/tag/08IB}{Tag 08IB}]{stacks-project} for the flat map $X\times\mathcal{M}_X\xrightarrow[]{\pi_{\mathcal{M}_X}} \mathcal{M}_X
$:
$$
R\pi_{\mathcal{M}_Y*}((j^*F)^{\vee}\otimes^{\textbf{L}}j^*F)^{\vee}\cong R\pi_{\mathcal{M}_Y*}(Lj^*(F^{\vee}\otimes^{\textbf{L}}F)^{\vee})\cong Lj^*(R\pi_{\mathcal{M}_X*}(F^{\vee}\otimes^{\textbf{L}}F)^{\vee})
$$
to obtain the left vertical arrow $Lj^*\At_F$.

Denote the inclusions of the closed point $p\subset \mathcal{M}_Y,\mathcal{M}_X$ by $i_p.$ The point $p$ corresponds to the stable sheaf $E_p$ on $Y$.
We restrict the commutative diagram (\ref{eq16}) to this closed point and then dualize, using again the base change theorem \cite[\href{https://stacks.math.columbia.edu/tag/08IB}{Tag 08IB}]{stacks-project} we obtain: 
\begin{equation} \label{eq17}
\begin{tikzcd}
(Li_p^*\mathbb{L}_{\mathcal{M}_{Y}})^{\vee}\arrow{r} \arrow{d}{(Li_p^*\At_E)^{\vee}} &(Li_p^*\mathbb{L}_{\mathcal{M}_{X}})^{\vee}\arrow{r}{} \arrow{d}{(Li_p^*\At_F)^{\vee}} &(Li_p^*\mathbb{L}_{\mathcal{M}_Y/\mathcal{M}_{X}})^{\vee}[1] \arrow[d,dotted] \\
R\pi_{p*}R\mathcal{H}om_{Y}(E_p,E_p)[1] 
\arrow{r}{\iota_{p}^*} & R\pi_{p*}R\mathcal{H}om_{X}(i_*E_p,i_*E_p)[1] \arrow{r}{} & \text{Cone}(\iota_{p}^{*}).
\end{tikzcd}
\end{equation}
Here the restriction $\iota_p$ of $\iota$ is the adjunction $$
Li^*i_{*}E_p\xrightarrow{\iota_p}{\text{}} E_p.
$$

Since both rows of (\ref{eq17}) are exact triangles, there exists an arrow $$
(Li_p^*\mathbb{L}_{\mathcal{M}_Y/\mathcal{M}_{X}})^{\vee}[1] \To
\text{Cone}(\iota_{p}^{*})= R\pi_{p*}(\text{Cone}(\iota_p)^{\vee}\otimes^{\textbf{L}}E_p)[2],$$ which makes the diagram commute. Its induced maps on the cohomology groups are unique though the map itself may not be. Taking the degree $0$ cohomology, we get what we call the reduced Atiyah class 
\begin{equation} \label{eq18}
\Atp:\mathcal{H}^{1}((Li_p^*\mathbb{L}_{\mathcal{M}_Y/\mathcal{M}_{X}})^{\vee}) \To
\pi_{p*}(\text{Cone}(\iota_p)^{\vee}\otimes^{\textbf{L}}E_p[2]).
\end{equation}

\begin{prop}\label{prop5.1}
The reduced Atiyah class  $\Atp$ is a morphism: $$
\Hom_{\mathbb{C}}(J/J^2|_p,\mathbb{C})\To \Hom_{Y}(E_p,E_p\otimes N_{Y/X}),
$$ and is injective.  
\end{prop}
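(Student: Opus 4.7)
The plan is to identify the source $\mathcal{H}^{1}((Li_p^*\mathbb{L}_{\mathcal{M}_Y/\mathcal{M}_X})^{\vee})$ and target $\pi_{p*}(\text{Cone}(\iota_p)^{\vee}\otimes^{\textbf{L}} E_p[2])$ of the reduced Atiyah class as the stated vector spaces via Lemma \ref{lem5.1}, and then to deduce injectivity by applying the four lemma to the degree-zero cohomology of diagram (\ref{eq17}).

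For the identifications: the cotangent complex of the closed immersion $j:\mathcal{M}_Y\hookrightarrow\mathcal{M}_X$ satisfies $\mathcal{H}^{0}(\mathbb{L}_{\mathcal{M}_Y/\mathcal{M}_X}) = 0$ and $\mathcal{H}^{-1}(\mathbb{L}_{\mathcal{M}_Y/\mathcal{M}_X}) = J/J^2$, so the derived pullback retains this top cohomology and Lemma \ref{lem5.1} applied to $Li_p^*\mathbb{L}_{\mathcal{M}_Y/\mathcal{M}_X}[-1]$ delivers the identification of the source with $\Hom_{\mathbb{C}}(J/J^2|_p, \mathbb{C})$. For the target, the Koszul resolution of $Li^*i_*E_p$ shows that $\text{Cone}(\iota_p)$ is concentrated in degrees $\leq -2$ with top cohomology $E_p\otimes N_{Y/X}^*$; hence $\text{Cone}(\iota_p)^{\vee}\otimes^{\textbf{L}} E_p[2] \cong R\mathcal{H}om(\text{Cone}(\iota_p), E_p)[2]$ lives in non-negative degrees with $\mathcal{H}^0 = \mathcal{H}om(E_p, E_p)\otimes N_{Y/X}$, and pushing forward via $\pi_{p*}$ using Lemma \ref{lem5.1} gives $\Hom_Y(E_p, E_p\otimes N_{Y/X})$.

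For injectivity, take $\mathcal{H}^0$ of diagram (\ref{eq17}): both rows become long exact sequences, yielding a commutative diagram with exact rows in which $\Atp$ is the third vertical arrow, flanked by the Kodaira-Spencer isomorphisms $T_p\mathcal{M}_Y \cong \ext^{1}_Y(E_p, E_p)$ and $T_p\mathcal{M}_X \cong \ext^{1}_X(i_*E_p, i_*E_p)$ (a standard consequence of stability for moduli of sheaves) and by a fourth vertical $\mathcal{H}^{1}((Li_p^*\mathbb{L}_{\mathcal{M}_Y})^{\vee}) \To \ext^{2}_Y(E_p, E_p)$. The four lemma then forces $\Atp$ to be injective, provided this fourth map is itself injective, which is precisely the dual of the standard fact that the truncated Atiyah class $\tau_{\geq -1}R\pi_{\mathcal{M}_Y*}(E^{\vee}\otimes^{\textbf{L}} E)^{\vee}[-1] \To \mathbb{L}_{\mathcal{M}_Y}$ is a perfect obstruction theory for the moduli of stable sheaves on the smooth variety $Y$.

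I expect this perfect-obstruction-theory statement to be the main technical point; the remainder is the two source/target identifications via Lemma \ref{lem5.1} and a formal four-lemma chase on the two exact triangles in (\ref{eq17}).
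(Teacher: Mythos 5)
Your proposal is correct and follows essentially the same route as the paper: identify the source via $\mathcal{H}^{-1}(\mathbb{L}_{\mathcal{M}_Y/\mathcal{M}_X})=J/J^2$ and the target via the Koszul computation $\mathcal{H}^{-i}(Li^*i_*E_p)\cong\wedge^iN^*_{Y/X}\otimes E_p$ together with Lemma \ref{lem5.1}, then run a four/five-lemma chase on the long exact sequences of (\ref{eq17}). The paper justifies the flanking isomorphisms and the injectivity of $\mathcal{H}^1((Li_p^*\mathbb{L}_{\mathcal{M}_Y})^{\vee})\to\ext^2_Y(E_p,E_p)$ by citing that the Atiyah classes are obstruction theories (\cite[Theorem 4.1]{HT} combined with \cite[Theorem 4.5, Lemma 4.6]{bf}), which is exactly the ``perfect obstruction theory'' fact you invoke.
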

\begin{proof}
Consider the following part of the long exact sequences associated with (\ref{eq17}):
\begin{equation*} 
\begin{tikzcd}[column sep=small]
\mathcal{H}^0((Li_p^*\mathbb{L}_{\mathcal{M}_{Y}})^{\vee})\arrow{r} \arrow[d] &\mathcal{H}^0((Li_p^*\mathbb{L}_{\mathcal{M}_{X}})^{\vee})\arrow{r}{} \arrow{d} &\mathcal{H}^1((Li_p^*\mathbb{L}_{\mathcal{M}_Y/\mathcal{M}_{X}})^{\vee}) \arrow{d}{\Atp}\arrow{r} & \mathcal{H}^1((Li_p^*\mathbb{L}_{\mathcal{M}_{Y}})^{\vee}) \arrow{d}\\
\ext^1_{Y}(E_p,E_p) 
\arrow{r}{\iota_{p}^*} & \ext^1_{X}(i_*E_p,i_*E_p) \arrow{r}{} & \pi_{p*}(\text{Cone}(\iota_p)^{\vee}\otimes^{\textbf{L}}E_p[2]) \arrow{r}& \ext^2_{Y}(E_p,E_p).
\end{tikzcd}
\end{equation*}
As we are on the closed point $p$, all terms in the diagram are vector spaces supported on $p$.

Apply Lemma \ref{lem5.1} to the functors $(\ \ )^{\vee}$ and $Li_p^*,$ we see the term
$$
\mathcal{H}^0((Li_p^*\mathbb{L}_{\mathcal{M}_{Y}})^{\vee})\cong (i_p^*\Omega_{\mathcal{M}_Y})^*
$$ is the tangent space at $p$ of $\mathcal{M}_Y.$ Similarly, $\mathcal{H}^0((Li_p^*\mathbb{L}_{\mathcal{M}_{X}})^{\vee})$ is the tangent space of $\mathcal{M}_X,$ and $\mathcal{H}^1((Li_p^*\mathbb{L}_{\mathcal{M}_Y/\mathcal{M}_{X}})^{\vee})$ is the normal vector space $\Hom_{\mathbb{C}}(J/J^2|_p,\mathbb{C}).$ 

As explained in \cite[p.248, 250]{fm}, the cohomology groups $\mathcal{H}^{-i}(Li^*i_*E_p)$ are $\wedge^{i}N^*_{Y/X}\otimes E_p$. Consequently the perfect complex $\text{Cone}(\iota_p)$ is concentrated in degrees $\leq-2$, and we obtain $$
\pi_{p*}(\text{Cone}(\iota_p)^{\vee}\otimes^{\textbf{L}}E_p[2])\cong\Hom_{Y}(\text{Cone}(\iota_p)[-2],E_p)\cong\Hom_{Y}(E_p,E_p\otimes N_{Y/X})
$$ by applying Lemma \ref{lem5.1} to the functors $R\mathcal{H}om_{Y}(\ \ \ , E_p)$ and $R\pi_{p*}.$ This proves that $\Atp$ is a morphism: $$
\Hom_{\mathbb{C}}(J/J^2|_p,\mathbb{C})\To \Hom_{Y}(E_p,E_p\otimes N_{Y/X}).
$$

Since the Atiyah classes $\At_F,\At_E$ provide obstruction theories of $\mathcal{M}_{X}$ and $\mathcal{M}_{Y}$ respectively by \cite[Theorem 4.1]{HT}, we know the vertical maps in the above commutative diagram: $$
(i_p^*\Omega_{\mathcal{M}_Y})^*\To \ext^1_{Y}(E_p,E_p), \text{and } (i_p^*\Omega_{\mathcal{M}_X})^*\To \ext^1_{X}(i_*E_p,i_*E_p)
$$ are isomorphisms by \cite[Theorem 4.5]{bf} and the vertical map from the universal obstruction space: $$
\mathcal{H}^1((Li_p^*\mathbb{L}_{\mathcal{M}_{Y}})^{\vee})\To \ext^2_{Y}(E_p,E_p)
$$ is injective by \cite[Lemma 4.6]{bf}. The five lemma then gives the injectivity of $\Atp$.
\end{proof}
To prove Proposition \ref{prop3.1} and show that $\psi_p$ in (\ref{eq5}) is injective, it suffices to prove the following result.

\begin{prop} \label{prop5.2}
The reduced Atiyah class
$$
\Atp:\Hom_{\mathbb{C}}(J/J^2|_p,\mathbb{C})\To \Hom_{Y}(E_p,E_p\otimes N_{Y/X})
$$  
is equal to $\psi_p$ in \emph{(\ref{eq5})}.   
\end{prop}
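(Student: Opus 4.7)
The plan is to give both maps a common deformation-theoretic description and then match them by a local calculation. A normal vector $v\in\Hom(J/J^2|_p,\mathbb{C})$ lifts to a tangent vector $\tilde v\in T_p\mathcal{M}_X$, and under the Atiyah-class-induced Kodaira--Spencer isomorphism $T_p\mathcal{M}_X \cong \ext^1_X(i_*E_p,i_*E_p)$ (an isomorphism by \cite[Theorem 4.5]{bf}, as already invoked in Proposition \ref{prop5.1}), this records a first-order deformation of the pushforward $i_*E_p$. Both $\Atp(v)$ and $\psi_p(v)$ should be the ``normal-to-$Y$ component'' of this deformation, and the goal is to verify this coincidence.

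First I would unpack the construction of $\Atp$ by tracing through the dotted arrow in (\ref{eq17}): given $v$, choose any lift $\tilde v\in T_p\mathcal{M}_X = \ext^1_X(i_*E_p,i_*E_p)$; although $\tilde v$ is only well-defined modulo $\ext^1_Y(E_p,E_p)$, its image in $\Hom_Y(E_p,E_p\otimes N_{Y/X})$ under the connecting map of the long exact sequence for $\iota_p$ is independent of the choice, and by commutativity of (\ref{eq17}) together with the isomorphism on tangent spaces it equals $\Atp(v)$. Next I would realize this lift explicitly on $F$: pulling back $F$ along a morphism $\Spec \mathbb{C}[\epsilon]/(\epsilon^2)\to \mathcal{M}_X$ representing $\tilde v$ produces a flat first-order family $F_{\tilde v}$ whose extension class of $i_*E_p$ by itself is $\tilde v$. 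Finally I would compute the image of $\tilde v$ in $\Hom_Y(E_p,E_p\otimes N_{Y/X})$ in local coordinates and match it with $\psi_p(v)$. Choose local coordinates near $p$ with $J=(t_{k+1},\dots,t_m)$ and near $y\in Y$ with $I=(x_{d+1},\dots,x_n)$. The inclusion $IF\subset JF$ obtained in the proof of Proposition \ref{prop2.1} means that for a local section $f$ of $F$ one can write $x_j f \equiv \sum_i t_i g_{i,j} \pmod{I\cdot JF + J^2 F}$; the induced map (\ref{eq3}) sends $dx_j\otimes f|_p$ to $\sum_i dt_i\otimes g_{i,j}|_p$, and composition with $v$ yields the Higgs field $\psi_p(v)$. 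On the other hand the Kodaira--Spencer derivative of $F$ in the $\partial_{t_i}$-direction, passed through the connecting map $\ext^1_X(i_*E_p,i_*E_p)\to \Hom_Y(E_p,E_p\otimes N_{Y/X})$, extracts the same coefficients $g_{i,j}|_p$, because this connecting map is precisely the measurement of the first-order failure of the deformation to be scheme-theoretically supported on $Y$.

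The hard part will be the bridge in the last step: $\Atp$ is defined abstractly through derived categories, cotangent complexes and the five-lemma, while $\psi_p$ is built from the concrete sheaf morphism (\ref{eq2}). Executing the comparison rigorously amounts to identifying the Atiyah class of $F$ relative to $\mathcal{M}_X$, restricted to the $J/J^2$-part at $p$, with the ``first-order-in-$J$ multiplication'' datum encoded by (\ref{eq2}); the universal coefficients $g_{i,j}$ appearing in $x_j f \equiv \sum_i t_i g_{i,j}$ are exactly what both descriptions return. Flatness of $F$ over $\mathcal{M}_X$ is used essentially (so that $J\otimes F\cong JF$ and the map (\ref{eq3}) is well-defined), and one must check that the Brauer twist on $F$ is inert, since both constructions are equivariant under it.
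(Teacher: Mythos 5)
There is a genuine gap, and it sits at the very first step: you assume every normal vector $v\in\Hom_{\mathbb{C}}(J/J^2|_p,\mathbb{C})$ lifts to a tangent vector $\tilde v\in T_p\mathcal{M}_X$, i.e.\ that $T_p\mathcal{M}_X\to \Hom_{\mathbb{C}}(J/J^2|_p,\mathbb{C})$ is surjective (equivalently, that $J/J^2|_p\to \Omega_{\mathcal{M}_X}|_p$ is injective). This is false in general: $\mathcal{M}_X$ and $\mathcal{M}_Y$ are arbitrary Gieseker moduli spaces and may be badly singular at $p$, so the conormal map at $p$ can have a kernel. Your argument therefore only pins down, and only compares, $\Atp$ and $\psi_p$ on the image of $T_p\mathcal{M}_X$ inside the normal space. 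That is not enough for the intended application: Proposition \ref{prop3.1} (hence Theorem \ref{thm3.1}) needs injectivity of $\psi_p$ on all of $\Hom_{\mathbb{C}}(J/J^2|_p,\mathbb{C})$, since the points of the normal cone $C_{\mathcal{M}_Y/\mathcal{M}_X}$ one must separate are not confined to directions liftable to $T_p\mathcal{M}_X$. The paper's proof is organized precisely to dodge this: Lemma \ref{lem5.2} first treats the linear case $X=\Tot_Y(V)$, where pushing forward along $\Tot_Y(V)\to Y$ gives a retraction of moduli spaces, so $T_p\mathcal{M}_X$ splits as $T_p\mathcal{M}_Y\oplus \Hom_{\mathbb{C}}(J/J^2|_p,\mathbb{C})$ and the diagram chase you propose is legitimate there; the general case is then reduced to the linear one via the Atiyah class of the family $\mathcal{F}$ on $D_{Y/X}\times_{\mathbb{C}}D_{\mathcal{M}_Y/\mathcal{M}_X}$, which yields a $\mathbb{C}$-family of maps $\mathbf{F}_t$ with $\mathbf{F}_{t\neq0}=\Atp$, continuity giving $\mathbf{F}_0=\Atp$, and the $t=0$ map identified with $\psi_p$ using that the upper horizontal arrow is surjective for the cone $C_{\mathcal{M}_Y/\mathcal{M}_X}$ and invoking the linear case. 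Without some substitute for this degeneration-and-continuity step, your plan does not prove the proposition as stated.

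A secondary point: the step you flag as "the hard part" — identifying the connecting map $\ext^1_X(i_*E_p,i_*E_p)\to\Hom_Y(E_p,E_p\otimes N_{Y/X})$ applied to the Kodaira--Spencer class with the coefficients extracted from $x_jf\equiv\sum_i t_i g_{i,j}$ — is exactly the Koszul-resolution computation of Lemma \ref{lem5.2}, and the paper only carries it out in the linear case, where a global Koszul resolution of $\mathcal{O}_Y$ by $\wedge^{\bullet}\pi^*V^*$ is available and the identification $\mathcal{H}^{-1}(Li^*i_*E_p)\cong N^*_{Y/X}\otimes E_p$ is made with fixed conventions. Doing this comparison directly in the general embedded situation, in local coordinates as you suggest, would require checking compatibility of these identifications with the chosen local resolutions; reducing to the linear model is what makes the computation clean. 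So even the part of your plan that is in the right direction tacitly relies on being in (or first degenerating to) the linear situation.
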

We use the deformation to the normal cone family $D_{Y/X}\to \mathbb{C}$ to reduce Proposition \ref{prop5.2} to the linear case: i.e. when $X$ is a vector bundle over the zero section $Y,$ which is now quasi-projective. 
\begin{lemma} \label{lem5.2}
Proposition \emph{\ref{prop5.2}} holds in the case that $X=\Tot_Y(V)$ is a vector bundle $V$ over the zero section $Y$.    
\end{lemma}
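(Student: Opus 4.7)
The plan is to use the spectral correspondence on $X = \Tot_Y(V)$ to identify both $\psi_p$ and $\Atp$ with a canonical isomorphism $\kappa$ between the normal space $N_{\mathcal{M}_Y/\mathcal{M}_X}|_p$ and the Higgs-field space $\Hom_Y(E_p, E_p \otimes V)$, from which equality follows tautologically.

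First I would unpack the spectral correspondence: compactly supported sheaves on $X = \Tot_Y(V)$ correspond to $V$-valued Higgs pairs $(E,\phi)$ on $Y$, so $\mathcal{M}_X$ becomes the moduli of $V$-valued Higgs bundles, and $\mathcal{M}_Y \hookrightarrow \mathcal{M}_X$ is the zero-section inclusion $E \mapsto (E,0)$. The universal sheaf $F$ on $X \times \mathcal{M}_X$ thus corresponds to a universal Higgs pair $(\mathcal{E},\Phi)$ on $Y \times \mathcal{M}_X$, and since $\Phi|_{Y \times \mathcal{M}_Y}=0$, we have $\Phi \in H^0(\mathcal{H}om(\mathcal{E},\mathcal{E}\otimes V) \otimes J)$. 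Standard Higgs-bundle deformation theory (hypercohomology of $[\mathcal{E}nd(\mathcal{E}) \xrightarrow{[\phi,-]} \mathcal{E}nd(\mathcal{E})\otimes V]$) yields at $p=(E_p,0)$ the canonical splitting
\[
\ext^1_X(i_*E_p, i_*E_p) \cong \ext^1_Y(E_p,E_p) \oplus \Hom_Y(E_p,E_p\otimes V),
\]
whose first summand is $T_p\mathcal{M}_Y$. This yields a canonical isomorphism $\kappa : N_{\mathcal{M}_Y/\mathcal{M}_X}|_p \xrightarrow{\sim} \Hom_Y(E_p,E_p\otimes V)$.

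To see $\psi_p = \kappa$, recall from Proposition \ref{prop2.1} that the morphism (\ref{eq3}) is obtained by reducing the multiplication $I \otimes F \to JF$ modulo $I^2$ and $J^2$. Under the spectral correspondence, multiplication by $I = V^*\cdot\mathcal{O}_X$ on $F$ pushes forward to the action of the universal Higgs field $\Phi$ on $\mathcal{E}$, so the reduction at $p$ is represented by the derivative $d\Phi|_p \in J/J^2|_p \otimes \Hom_Y(E_p,E_p\otimes V)$; contracting with a normal vector $v$ gives exactly $\phi_v = \kappa(v)$. To see $\Atp = \kappa$, the Atiyah class of $F$ realizes the Kodaira--Spencer map $T_p\mathcal{M}_X \to \ext^1_X(i_*E_p,i_*E_p)$, and the diagram (\ref{eq17}) restricted to $p$ splits according to the canonical decomposition above. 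Equivalently, each $\phi \in \Hom_Y(E_p,E_p\otimes V)$ is the Kodaira--Spencer class of the tautological first-order deformation $(E_p, t\phi)$ of $(E_p,0)$ inside $\mathcal{M}_X$; projecting to the Higgs-field summand returns $\phi$ itself, which is the definition of $\kappa$.

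The main obstacle is the precise identification of the cone-of-$\iota$ piece of (\ref{eq17}) with the Higgs-field summand of $\ext^1_X(i_*E_p, i_*E_p)$, and verifying that the normal component of $\At_F$ coincides with $d\Phi|_p$. This amounts to combining the naturality of the Atiyah class under the affine morphism $\pi: X \to Y$ (which converts sheaves to Higgs pairs) with the explicit two-term resolution of $R\mathcal{H}om_X(i_*E_p, i_*E_p)$; once this matching is in hand, the chain $\Atp = \kappa = \psi_p$ follows directly, completing the linear case.
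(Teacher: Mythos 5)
Your setup mirrors the paper's first reduction: because $X=\Tot_Y(V)$ has a projection to $Y$, pushing sheaves forward retracts a neighbourhood of $\mathcal{M}_Y$ onto $\mathcal{M}_Y$, the tangent space $T_p\mathcal{M}_X$ splits off the normal space, and both $\Atp$ and $\psi_p$ are determined by squares of the form (\ref{eq19})/(\ref{eq20}) over the same bottom arrow $\ext^1_{X}(i_*E_p,i_*E_p)\to \Hom_Y(E_p,E_p\otimes N_{Y/X})$. Up to repackaging (your $\kappa$ is the composite of the Kodaira--Spencer isomorphism with that bottom arrow, restricted to the normal summand), this is the same skeleton as the paper's argument.

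However, there is a genuine gap at exactly the point you flag as ``the main obstacle.'' The entire content of the lemma is the comparison between (a) the cohomologically defined map coming from the adjunction triangle $Li^*i_*E_p\to E_p$ --- concretely, for an extension class $\alpha\in\ext^1_X(i_*E_p,i_*E_p)$ the map $\mathcal{H}^{-1}(\alpha):N^*_{Y/X}\otimes E_p\to E_p$ --- and (b) the algebraically defined map coming from the $V^*$-multiplication (the module structure) on the corresponding first-order deformation $\mathbf{E}$, i.e.\ the contraction of $d\Phi|_p$ with $v$, which is how $\psi_p$ in (\ref{eq5}) is defined. Invoking ``standard Higgs-bundle deformation theory'' to produce the splitting $\ext^1_X(i_*E_p,i_*E_p)\cong\ext^1_Y(E_p,E_p)\oplus\Hom_Y(E_p,E_p\otimes V)$ and then declaring that the cone-of-$\iota$ piece of (\ref{eq17}) is the Higgs-field summand, and that the normal component of the Kodaira--Spencer/Atiyah class is $d\Phi|_p$, assumes precisely this comparison; the chain $\Atp=\kappa=\psi_p$ is therefore circular as written. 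The paper closes this gap by an explicit computation: it resolves $\mathcal{O}_Y$ on $\Tot_Y(V)$ by the Koszul complex $\wedge^\bullet\pi^*V^*$, represents $\alpha$ by a map of complexes out of $V^\bullet\otimes\pi^*L^\bullet$, identifies $\mathbf{E}$ with the mapping cone, and checks by hand that $\mathcal{H}^{-1}(\alpha)$ coincides with the multiplication-induced map (\ref{eq22}). Some such direct verification (or a citable statement that the local-to-global identification $\mathcal{H}^{-1}(Li^*i_*E_p)\cong N^*_{Y/X}\otimes E_p$ intertwines extension classes with the module-theoretic Higgs field of the deformed sheaf) is indispensable; without it your proposal restates the desired equality rather than proving it.
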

\begin{proof}
Consider, as in the proof of Proposition \ref{prop5.1}, the following commutative diagram associated with (\ref{eq17}):
\begin{equation}\label{eq19}
\begin{tikzcd}
T_p\mathcal{M}_{X}\arrow[r,->>] \arrow{d}{\simeq}&   \Hom_{\mathbb{C}}(J/J^2|_p,\mathbb{C}) \arrow{d}{\Atp}\\
\ext^1_{X}(i_*E_p,i_*E_p) \arrow{r}{} & \Hom_{Y}(E_p,E_p\otimes N_{Y/X}).
\end{tikzcd}    
\end{equation}
Here $T_p\mathcal{M}_{X}$ is the tangent space of $\mathcal{M}_X$ at $p.$
Since $X=\Tot_Y(V)$, pushing forward sheaves via the projection $$X=\Tot_Y(V)\to Y$$ induces a map from an open neighbourhood of $\mathcal{M}_Y\subset\mathcal{M}_{X}$ to $\mathcal{M}_Y$, which restricts to the identity map on $\mathcal{M}_Y.$ The tangent space $T_p\mathcal{M}_{X}$ splits as the direct sum of $T_p\mathcal{M}_{Y}$ and the normal vector space $\Hom_{\mathbb{C}}(J/J^2|_p,\mathbb{C})$. The upper horizontal map is therefore surjective and the diagram (\ref{eq19}) completely determines the map $\Atp$.

To show $\Atp=\psi_p$, we then only need to prove the following diagram commutes: 
\begin{equation}\label{eq20}
\begin{tikzcd}
T_p\mathcal{M}_{X}\arrow[r,->>] \arrow{d}{\simeq}&   \Hom_{\mathbb{C}}(J/J^2|_p,\mathbb{C}) \arrow{d}{\psi_p}\\
\ext^1_{X}(i_*E_p,i_*E_p) \arrow{r}{} & \Hom_{Y}(E_p,E_p\otimes N_{Y/X}).
\end{tikzcd}    
\end{equation}

Pick any class $\alpha\in \ext^1_{X}(i_*E_p,i_*E_p),$ which corresponds to an extension sequence on $X$:
\begin{equation}\label{eq21}
0\To i_*E_p\To \mathbf{E}\xrightarrow[]{r} i_*E_p \to 0.  
\end{equation}
The extension class $\alpha$ is a morphism in the derived category of $Y$:
$$
Li^*i_*E_p\xrightarrow[]{\alpha}E_p[1].
$$ Taking its degree $-1$ cohomology gives $$
\mathcal{H}^{-1}(\alpha):\mathcal{H}^{-1}(Li^*i_*E_p)\cong N^*_{Y/X}\otimes E_p \To E_p,
$$ which is the lower horizontal arrow in the diagram (\ref{eq20}). Here the isomorphism  $\mathcal{H}^{-1}(Li^*i_*E_p)\cong N^*_{Y/X}\otimes E_p$ is obtained in \cite[p.248, 250]{fm} via the Koszul resolution.

On the other hand, denote by $\pi$ the projection $X=\Tot_Y(V)\to Y$ and 
consider the multiplication by $\pi^*V^*=\pi^*N^*_{Y/X}$ on the first order deformation $\mathbf{E}$ of $i_*E_p$ in (\ref{eq21}). The images of $$\pi^*V^*\otimes \mathbf{E}\To \mathbf{E}\xrightarrow[]{r} i_*E_p, \text{ and } \pi^*V^*\otimes i_*E_p\To \pi^*V^*\otimes \mathbf{E}\To \mathbf{E}$$ are zero as the multiplication $\pi^*V^*\cdot i_*E=0$. We therefore get a morphism 
\begin{equation*} 
\pi^*V^*\otimes\mathbf{E}/(\pi^*V^*\otimes i_*E_p) \cong \pi^*V^*\otimes i_*E_p\To i_*E_p,
\end{equation*}
whose push-forward via $\pi_*$: 
\begin{equation}\label{eq22}
 N_{Y/X}^*\otimes E_p\To E_p
\end{equation}
is the image of the class $\alpha$
via the composition in the diagram (\ref{eq20}):
$$
\ext^1_{X}(i_*E_p,i_*E_p)\cong T_p\mathcal{M}_X\To \Hom_{\mathbb{C}}(J/J^2|_p,\mathbb{C}) \xrightarrow[]{\psi_p} \Hom_{Y}(E_p,E_p\otimes N_{Y/X})
$$
since $\psi_p$ in (\ref{eq5}) is defined in the same way as above.

It remains to prove the two morphisms associated with $\alpha$: $\mathcal{H}^{-1}(\alpha)$ and (\ref{eq22})  are equal. By the local nature of the two maps, we are free to restrict to small open subsets of $X.$

Since $Y$ is the zero section of $X,$ $\mathcal{O}_{Y}$ on $X$ is resolved by the Koszul complex $$V^{\bullet}:=\cdots\To\wedge^2\pi^*V^{*}\To \pi^*V^{*} \To \mathcal{O}_{X},$$ where the differentials are given by contractions with the tautological section of $\pi^*V$ on $X$. Choose a locally free resolution $L^{\bullet}=(\dots \to L^{-1}\to L^0)$ of the sheaf $E_p$ on $Y$, $V^{\bullet}\otimes \pi^*L^{\bullet}$ is a locally free resolution of $i_*E_p$ on $X$.

Shrinking $X$ if necessary, $\alpha\in \ext^1_{X}(i_*E_p,i_*E_p)$ is represented by a morphism of complexes $$
f: V^{\bullet}\otimes \pi^*L^{\bullet}\To i_*E_p[1],
$$ and in particular we get a morphism in degree $-1$ 
$$
f_{-1}:\pi^*(L^0\otimes V^{*})\oplus\pi^*L^{-1}\To i_*E_p. 
$$ 
The adjunction of $f_{-1}$ is the map: $
L^0\otimes V^*\oplus L^{-1} \to E_p,
$ which induces $$
\mathcal{H}^{-1}(\alpha): (L^0/L^{-1})\otimes V^*\cong E_p\otimes V^* \To E_p.
$$

The middle term  $\mathbf{E}$ in the extension sequence (\ref{eq21}) is the mapping cone of $$
f[-1]:V^{\bullet}\otimes \pi^*L^{\bullet}[-1]\To i_*E_p,
$$ and the sequence (\ref{eq21}) is the same as 
\begin{equation*}
0\To i_*E_p \To(\pi^*L^{0}\oplus i_*E_p)/(\pi^*(L^0\otimes V^{*})\oplus\pi^*L^{-1})\To \pi^*L^{0}/(\pi^*(L^0\otimes V^{*})\oplus\pi^*L^{-1})\To 0.
\end{equation*}

It follows by direct calculations, using the above extension sequence, that $\mathcal{H}^{-1}(\alpha)$ equals the map in (\ref{eq22}).
\end{proof}
We now deal with the general case:
\begin{proof}[Proof of Proposition \emph{\ref{prop5.2}}]
The family $D_{Y/X}\to \mathbb{C}$ and the Atiyah class of the sheaf $\mathcal{F}$ (constructed in Proposition \ref{prop2.1}) on $D_{Y/X}\times_{\mathbb{C}}D_{\mathcal{M}_Y/\mathcal{M}_X}$  relate the general case to the linear case proved above.

The sheaf $\mathcal{F}$ is flat over $D_{\mathcal{M}_Y/\mathcal{M}_X}$ and the projection $$\pi^{}_{D_{\mathcal{M}_{Y}/\mathcal{M}_{X}}}:D_{Y/X}\times_{\mathbb{C}}D_{\mathcal{M}_Y/\mathcal{M}_X}\To D_{\mathcal{M}_Y/\mathcal{M}_X}$$ is smooth. By Hilbert's syzygy theorem, $\mathcal{F}$ is perfect on $D_{Y/X}\times_{\mathbb{C}}D_{\mathcal{M}_Y/\mathcal{M}_X}$.

We have the following Atiyah class  relative to the base $D_{Y/X}$: $$
\mathcal{F}[-1]\To \mathcal{F}\otimes^{\mathbf{L}}\mathbb{L}_{\pi^{}_{D_{Y/X}}}\cong \mathcal{F}\otimes^{\mathbf{L}}L\pi^{*}_{D_{\mathcal{M}_{Y}/\mathcal{M}_{X}}}\mathbb{L}_{D_{\mathcal{M}_{Y}/\mathcal{M}_{X}}\big/\mathbb{C}},
$$
where the last isomorphism follows from the flatness of 
$D_{Y/X}\to \mathbb{C}$. This induces $$
\At_{\mathcal{F}}:R\pi^{}_{D_{\mathcal{M}_{Y}/\mathcal{M}_{X}}*}(\mathcal{F}^{\vee}\otimes^{\mathbf{L}}\mathcal{F})^{\vee}[-1]\To \mathbb{L}_{D_{\mathcal{M}_{Y}/\mathcal{M}_{X}}\big/\mathbb{C}}.
$$

Consider the closed immersions $\mathbf{i}:Y\times\mathbb{C} \xhookrightarrow{}D_{Y/X}$ and $\mathbf{j}:\mathcal{M}_{Y}\times \mathbb{C}\xhookrightarrow{}D_{\mathcal{M}_{Y}/\mathcal{M}_{X}}.$ The conormal sheaf $t^{-1}I/t^{-2}I^2$ of $\mathbf{i}$ is isomorphic to the pull-back of $I/I^2$ via the projection $$
\pi_Y:Y\times \mathbb{C} \To Y.
$$ 
The conormal sheaf of $\mathbf{j}$ is isomorphic to $\pi_{\mathcal{M}_{Y}}^*(J/J^2)$.

Denote by $\mathbf{E}$ the (trivial) pull-back of $E$ to $Y\times \mathcal{M}_Y\times \mathbb{C}$, $\mathbf{i}_*\mathbf{E}\cong \mathbf{j}^*\mathcal{F}$. Similar to the diagram (\ref{eq16}), we have:
\begin{equation*} 
\begin{tikzcd}
L\mathbf{j}^*R\pi^{}_{D_{\mathcal{M}_{Y}/\mathcal{M}_{X}}*}(\mathcal{F}^{\vee}\otimes^{\mathbf{L}}\mathcal{F})^{\vee}[-1] \arrow{r}{} \arrow{d}{L\mathbf{j}^*\At_{\mathcal{F}}} & R\pi^{}_{\mathcal{M}_Y\times \mathbb{C}*}(\mathbf{E}^{\vee}\otimes^{\mathbf{L}}\mathbf{E})^{\vee}[-1] \arrow{r}{} \arrow{d}{\At_{\mathbf{E}}} & \text{Cone}\arrow{d}{} \\
L\mathbf{j}^*\mathbb{L}_{D^{}_{\mathcal{M}_{Y}/\mathcal{M}_{X}}\big/\mathbb{C}} \arrow{r}{} & \mathbb{L}_{\mathcal{M}_{Y}\times \mathbb{C}/\mathbb{C}} \arrow{r}{} & \mathbb{L}_{\mathcal{M}_{Y}\times \mathbb{C}\big/D^{}_{\mathcal{M}_{Y}/\mathcal{M}_{X}}}.  
\end{tikzcd}
\end{equation*}
As in (\ref{eq16}), here we use the base change theorem \cite[\href{https://stacks.math.columbia.edu/tag/08IB}{Tag 08IB}]{stacks-project} and the flatness of $$
\pi^{}_{D_{\mathcal{M}_{Y}/\mathcal{M}_{X}}}:D_{Y/X}\times_{\mathbb{C}}D_{\mathcal{M}_Y/\mathcal{M}_X}\To D_{\mathcal{M}_Y/\mathcal{M}_X}
$$
to obtain the left map in the first row: $$
L\mathbf{j}^*R\pi^{}_{D_{\mathcal{M}_{Y}/\mathcal{M}_{X}}*}(\mathcal{F}^{\vee}\otimes^{\mathbf{L}}\mathcal{F})^{\vee}
\cong
R\pi^{}_{\mathcal{M}_Y\times \mathbb{C}*}((\mathbf{j}^*\mathcal{F})^{\vee}\otimes^{\mathbf{L}}\mathbf{j}^*\mathcal{F})^{\vee}
\To R\pi^{}_{\mathcal{M}_Y\times \mathbb{C}*}(\mathbf{E}^{\vee}\otimes^{\mathbf{L}}\mathbf{E})^{\vee}.
$$

Denote the inclusion of the affine line: $p\times\mathbb{C}\xhookrightarrow[]{}\mathcal{M}_{Y}\times \mathbb{C}$ by $i_{p\times\mathbb{C}}$.
Pull the above diagram back by $i_{p\times\mathbb{C}}$ and then dualize, we obtain the morphism
$$
\mathbf{F}:\mathcal{H}^{1}((Li_{p\times\mathbb{C}}^*\mathbb{L}^{}_{\mathcal{M}_{Y}\times \mathbb{C}\big/D^{}_{\mathcal{M}_{Y}/\mathcal{M}_{X}}})^{\vee}) \To
\mathcal{H}^{1}((Li_{p\times\mathbb{C}}^*\text{Cone})^{\vee}).
$$
The same arguments as in the first half of Proposition \ref{prop5.1} show that $\mathbf{F}$ is a map between two trivial vector bundles on $p\times\mathbb{C}$ with fibres $\Hom_{\mathbb{C}}(J/J^2|_p,\mathbb{C})$ and $\Hom_{Y}(E_p,E_p\otimes N_{Y/X})$ respectively. The trivializations are obtained by our descriptions of the conormal sheaves of $\mathbf{i}$ and $\mathbf{j}$.

The restrictions of $\mathbf{F}$ then give a $\mathbb{C}$-family of morphisms 
$$\mathbf{F}_t:\Hom_{\mathbb{C}}(J/J^2|_p,\mathbb{C})\To \Hom_{Y}(E_p,E_p\otimes N_{Y/X})$$ on each fibre $p\times \{t\}$ of the family: $p\times\mathbb{C}\to\mathbb{C}$. The maps $\mathbf{F}_{t\neq 0}=\Atp$ since $\mathcal{F}$ restricts to the universal sheaf $F$ on the fibre $X\times \mathcal{M}_{X}$ over $t\neq 0$. By continuity, $\mathbf{F}_0=\mathbf{F}_{t\neq0}=\Atp.$

We show now $\mathbf{F}_0$ is equal to $\psi_p$ in (\ref{eq5}), which finishes the proof. Denote the restriction of $\mathcal{F}$ to the special fibre $C_{\mathcal{M}_Y/\mathcal{M}_X}\times N_{Y/X}$ by $\mathcal{F}_0.$ As in (\ref{eq19}), we have the commutative diagram:
\begin{equation}\label{eq23}
\begin{tikzcd}
T_pC^{}_{\mathcal{M}_Y/\mathcal{M}_X}\arrow[r,->>] \arrow{d}{\mathcal{H}^0((Li_p^*\At_{\mathcal{F}_0}{})^{\vee})}&   \Hom_{\mathbb{C}}(J/J^2|_p,\mathbb{C}) \arrow{d}{\mathbf{F}_0}\\
\ext^1_{N_{Y/X}}(i_*E_p,i_*E_p) \arrow{r}{} & \Hom_{Y}(E_p,E_p\otimes N_{Y/X}).
\end{tikzcd}    
\end{equation}
The right vertical arrow is $\mathbf{F}_0$ by naturality.
The upper horizontal map is surjective since $C^{}_{\mathcal{M}_Y/\mathcal{M}_X}$ is a cone over $\mathcal{M}_Y,$ therefore to show $\mathbf{F}_0=\psi_p$ we only need to check the commutativity of 
\begin{equation}\label{eq24}
\begin{tikzcd}
T_pC^{}_{\mathcal{M}_Y/\mathcal{M}_X}\arrow[r,->>] \arrow{d}{\mathcal{H}^0((Li_p^*\At_{\mathcal{F}_0}{})^{\vee})}&   \Hom_{\mathbb{C}}(J/J^2|_p,\mathbb{C}) \arrow{d}{\psi_p}\\
\ext^1_{N_{Y/X}}(i_*E_p,i_*E_p) \arrow{r}{} & \Hom_{Y}(E_p,E_p\otimes N_{Y/X}).
\end{tikzcd}    
\end{equation}

This follows from the functoriality of the Atiyah classes and the linear case proved in Lemma \ref{lem5.2}. Indeed, the sheaf $\mathcal{F}_0$ on $C_{\mathcal{M}_Y/\mathcal{M}_X}\times N_{Y/X}$ induces a classfying map
$$
\psi_0:C^{}_{\mathcal{M}_Y/\mathcal{M}_X}\To \mathcal{M}^{}_{N_{Y/X}}.
$$ The diagram (\ref{eq24}) factorizes as
$$
\begin{tikzcd}
T_pC^{}_{\mathcal{M}_Y/\mathcal{M}_X}\arrow[r] \arrow{d}{\psi_{0*}}&   \Hom_{\mathbb{C}}(J/J^2|_p,\mathbb{C}) \arrow{d}{}\\
T_p\mathcal{M}^{}_{N_{Y/X}} \arrow{r}{} \arrow{d}{}& N_{\mathcal{M}^{}_Y/\mathcal{M}^{}_{N_{Y/X}}}(p) \arrow{d}{}
\\
\ext^1_{N_{Y/X}}(i_*E_p,i_*E_p) \arrow{r}{} & \Hom_{Y}(E_p,E_p\otimes N_{Y/X}),    
\end{tikzcd}
$$ where $N_{\mathcal{M}^{}_Y/\mathcal{M}^{}_{N_{Y/X}}}(p)$ is the normal vector space of $\mathcal{M}_Y\xhookrightarrow{}\mathcal{M}^{}_{N_{Y/X}}$ at the closed point $p$. Its upper half is the commutative diagram associated with
$$
\xymatrix{
		\,\mathcal{M}_Y \,\ar@{^{(}->}[r]\ar@{=}[d] & C^{}_{\mathcal{M}_Y/\mathcal{M}_X} \, \ar[d]^{\psi_0} \\
		\,\mathcal{M}_Y \, \ar@{^{(}->}[r] & \mathcal{M}^{}_{N_{Y/X}}.}
$$
The lower half is exactly the diagram (\ref{eq20}) and also commutes.
\end{proof}
\begin{proof}[Proof of Proposition \emph{\ref{prop3.1}}]
Proposition \ref{prop5.2} identifies $\psi_p$ in (\ref{eq5}) with the reduced Atiyah class $\Atp,$ which is injective by Proposition \ref{prop5.1}.
\end{proof}
\bibliographystyle{plain}\bibliography{name}
\end{document}